\documentclass[11pt]{amsart}
\usepackage{graphicx}
\usepackage{latexsym,amsmath,amsfonts,amscd, amsthm}
\usepackage{color}
\usepackage[colorlinks=true]{hyperref}
\hypersetup{urlcolor=blue, citecolor=red}
\usepackage{tikz}
\usetikzlibrary{arrows,backgrounds,snakes}
\usepackage{epsfig}
\usepackage{amssymb}
\usepackage{xcolor}
\usepackage{comment}
\usepackage{subfigure}
\usepackage{multirow}
\newtheoremstyle{plainNoItalics}{}{}{\normalfont}{}{\bfseries}{.}{ }{}
\theoremstyle{plain}
\newtheorem{thm}{Theorem}[section]

\newtheorem{defn}[thm]{Definition}
\newtheorem{rem}[thm]{Remark}
\newtheorem{prop}[thm]{Proposition}

\newcommand{\f}{\frac}

\newcommand{\beq}{\begin{equation}}
\newcommand{\eeq}{\end{equation}}
\newcommand{\beqa}{\begin{eqnarray}}
\newcommand{\eeqa}{\end{eqnarray}}
\newcommand{\bit}{\begin{itemize}}
\newcommand{\eit}{\end{itemize}}
\newcommand{\bedef}{\begin{defn}}
\newcommand{\edefn}{\end{defn}}
\newcommand{\bpro}{\begin{prop}}
\newcommand{\epro}{\end{prop}}
\newcommand{\RR}{\mathbb{R}}
\newcommand{\NN}{\mathbb{N}}

\newcommand\bx{{\mathbf x}}
\newcommand\bv{{\mathbf v}}
\newcommand\bE{{\mathbf E}}
\DeclareMathOperator\dD{d}
\newcommand{\mE}{\mathcal E}
\newcommand{\mH}{\mathcal H}



\newcommand{\bC}{{\mathbf C}}

\newcommand{\xL}{{x_{i-\frac{1}{2}}}}
\newcommand{\xR}{{x_{i+\frac{1}{2}}}}

\newcommand{\iR}{{i+\frac{1}{2}}}

\newcommand{\testR}{{\phi}}   
\newcommand{\testP}{{\eta}}   
\newcommand{\testE}{{\zeta}}

\newcommand{\pf}{\partial f}
\newcommand{\pt}{\partial t}
\newcommand{\px}{\partial x}

\newcommand{\pv}{\partial v}

\newcommand\ds{ \displaystyle }


\setlength{\oddsidemargin}{-0.5cm}
\setlength{\evensidemargin}{-0.5cm}
\setlength{\textwidth}{17.cm}
\setlength{\textheight}{24.cm}
\setlength{\topmargin}{-1.cm}


\email{francis.filbet@math.univ-toulouse.fr}
\email{txiong@xmu.edu.cn}

\title[Conserative method for the Vlasov-Poisson system]{Conservative discontinuous Galerkin/Hermite Spectral Method for the  Vlasov-Poisson System} 

\author{Francis Filbet  and Tao Xiong}

\keywords{energy conserving; discontinuous Galerkin method; Hermite spectral method; Vlasov-Poisson}

\subjclass[2010]{Primary: 76P05, 
  82C40, 
  Secondary: 65N08, 
  65N35 
}

\begin{document}

\maketitle

\centerline{\scshape Francis Filbet}
\medskip
{\footnotesize
    \centerline{Institut de Math\'ematiques de Toulouse , Universit\'e Paul Sabatier}
    \centerline{Toulouse, France}
}

\medskip

\centerline{\scshape Tao Xiong}
\medskip
{\footnotesize
    \centerline{School of Mathematical Sciences, Xiamen University}
    \centerline{Fujian Provincial Key Laboratory of Math. Mod. \& HPSC}
    \centerline{Xiamen,  Fujian, 361005, P.R. China}
  }
  
\bigskip

\begin{abstract}
We propose a class of conservative discontinuous Galerkin methods  for
the Vlasov-Poisson system written as a hyperbolic system using
Hermite polynomials in the velocity variable. These schemes are  designed to be
systematically as accurate as one wants with provable conservation of
mass and possibly  total energy.  Such properties in general are hard
to achieve within other numerical method frameworks for simulating the
Vlasov-Poisson system. The proposed scheme employs discontinuous
Galerkin discretization for both the Vlasov and the Poisson
equations, resulting in a consistent description of the distribution
function and electric field.  Numerical simulations are performed to
verify the  order of accuracy  and  conservation properties.   
\end{abstract}

\vspace{0.1cm}

\tableofcontents

\section{Introduction}
\label{sec1}
\setcounter{equation}{0}
\setcounter{figure}{0}
\setcounter{table}{0}
One of the simplest model  that is currently applied in plasma physics
simulations is the Vlasov-Poisson system. Such a system
describes the evolution  of charged particles (electrons
and ions) under the effects of a self-consistent
electric field. For each species labelled $\alpha$, 
the unknown $f_\alpha(t,\bx,\bv)$, depending on the time $t$, the position $\bx$, and the velocity
$\bv$, represents the distribution function of particles in phase space. This model can be used for the study of beam propagation or 
a collisionless plasma
\begin{equation}
  \label{vlasov0}
  \left\{
    \begin{array}{l}
\ds\frac{\partial f_\alpha}{\partial t}\,+\,\bv\cdot\nabla_\bx f_\alpha \,+\,\frac{q_\alpha}{m_\alpha}\bE\cdot\nabla_\bv f_\alpha \,=\, 0\,,
      \\[0.9em]
      f_\alpha(t=0) = f_{\alpha,0}\,,
      \end{array}\right.
      \end{equation}
where $q_\alpha$ represents  the single charge and $m_\alpha$
represents the mass of one particle $\alpha$, whereas the electric
field $\bE= -\nabla_\bx \Phi$ satisfies the Poisson equation
\begin{equation}
\label{poisson0}
-4\pi\,\epsilon_0\,\Delta_\bx \Phi \,=\, \sum_{\alpha} q_\alpha
n_\alpha\,,\qquad{\rm with }\quad n_\alpha = \int_{\RR^3} f_\alpha \,\dD\bv\,,
\end{equation}
where $\epsilon_0$ is the vacuum permittivity. On the one hand, for a smooth and nonnegative initial data $f_{\alpha,0}$, the solution $f_\alpha(t)$
to \eqref{vlasov0} remains smooth and nonnegative for all $t\geq
0$.  On the other hand, for any function  $\beta\in C^1(\RR^+,\RR^+)$, we have
$$
\frac{\dD}{\dD t}\int_{\RR^d \times \RR^d} \beta(f_\alpha(t)) \,\dD \bx \dD\bv
=0,\quad\forall t\in\RR^+\,,
$$
which leads to the conservation of mass,  $L^p$ norms, for $1 \leq p \leq
+\infty$ and  kinetic entropy, 
$$
\mH(t) \,:=\, \int_{\RR^6}
f_\alpha(t)\,\ln\left(f(t)\right)\dD\bx \dD\bv \,=\, \mH(0), \quad\forall
t\geq 0\,.
$$
 We also get the conservation of momentum
$$
\sum_{\alpha} \int_{ \RR^6} m_\alpha \, \bv \,
f_\alpha(t)\,\dD\bx\dD\bv =  \sum_{\alpha} \int_{\RR^6} m_\alpha \, \bv \,
f_{\alpha,0}\,\dD\bx\dD\bv 
$$
and total energy
$$
\mE(t)\, :=\, \sum_{\alpha}\frac{m_\alpha}{2}\int_{\RR^6}
f_\alpha(t) \|\bv\|^2 \dD\bx \dD\bv \,+\, 2\pi\epsilon_0 \int_{\RR^3}
\|\bE\|^2 \dD\bx   \,=\, \mE(0)\,, \quad\forall
t\geq 0\,.
$$

The numerical resolution of the Vlasov-Poisson system \eqref{vlasov0}-\eqref{poisson0} is often performed
by particle methods (PIC) which consist in approximating the plasma by
a finite number of macro-particles. The trajectories of these particles are
computed from the characteristic curves given by the Vlasov equation,
whereas self-consistent fields are computed on a mesh of the physical
space. This method allows to obtain satisfying results with a few
number of particles (we refer to Birdsall-Langdon for more details
\cite{birdsall}). However, it suffers from  poor accuracy since  the numerical noise only decreases in $1/\sqrt N$ when
the number of particles $N$ is increased, which is not satisfying for
some specific problems. Therefore, different approaches, discretizing the Vlasov equation on
a mesh of phase space, have been proposed. Among them, the
Fourier-Fourier transform is based on a Fast Fourier Transform of the
distribution function in phase space, but this method is only
valid for periodic boundary conditions
\cite{klimas1}. Consequently, for non periodic boundary
conditions, Gibbs oscillations form at the boundary of the grid and
become source of spurious oscillations which propagate into the
distribution function.  Later semi-Lagrangian methods, which consist in computing the distribution
function at each grid point by following the characteristic curves backward, were also used. To compute the
origin of the characteristics, a high order interpolation method is
needed.  E. Sonnendr\"ucker $et$ $al.$ proposed the cubic spline reconstruction which gives very good results \cite{sonnen1}, but the
use of spline interpolation destroys the local 
character of the reconstruction. Nakamura and Yabe also presented  the Cubic
Interpolated Propagation (CIP) method based on the approximation of
the gradients of the distribution function in order to use a Hermite
interpolation \cite{yabe}. This method is very expensive in memory computation
since it needs the storage of $f$, $\nabla_xf$, and $\nabla_vf$.
Another scheme for the Vlasov equation is the Flux Corrected Transport
(FCT) \cite{boris} or more recently  finite volume methods \cite{filbet2001, filbet2003,FDD,oudet}: the basic idea is to compute the average of the
Vlasov equation solution in each cell of the phase space grid by a
conservative method. In the sixties, a variant has been proposed: rather to discretize the
function in velocity space, Galerkin methods with a “small” finite set
of orthogonal polynomials are used \cite{armstrong1967,
  joyce1971}. More recently, in \cite{tang, holloway} and \cite{holloway2},
the authors have shown the merit to use rescaled orthogonal basis like
the so-called scaled Hermite basis. In \cite{holloway2}, the authors
show that, according to some symmetry properties of the Hermite
weights, the numerical method conserves certain integral quantities
like the number of particles, the momentum, the energy or the
$L^2$-norm. These properties lead to algorithms that can provide long
time numerical stability while using a small set of basis
functions. The aim of the present work is to apply these techniques in order
to handle with the velocity space and design conservative
discretization with a reduced degree of freedom.

 For the space discretization, we adopt the point of view of
 discontinuous Galerkin  methods.  These methods are similar to finite
 element methods but use discontinuous polynomials and are
 particularly well suited to handle complicated 
boundaries which may arise in many practical applications. Moreover,  their local
 construction endows the methods with good local conservation
 properties without sacrificing the order of accuracy.  Furthermore, they
 are extremely flexible in handling $hp$-adaptivity,  the boundary
 conditions are imposed weakly and the mass matrices are
 block-diagonal which results in very efficient time-stepping
 algorithms in the context of time-dependent problems, as it is the
 case here. Among the computational works, we mention the use of
 discontinuous Galerkin schemes for the Vlasov-Poisson system in
 \cite{yingda-2013,heath-2012};  and Vlasov-Maxwell
 \cite{yingda-2014}. Several theoretical works have been performed to analyze a
 family of semi-discrete discontinuous Galerkin schemes for the
 Vlasov-Poisson system  with periodic boundary conditions, for the one
 \cite{ayuso2009} and multi-dimensional cases \cite{ayuso2012} , but
 also for the Vlasov-Maxwell system \cite{fengyan-2015}. The authors
 show optimal error estimates for both the distribution function and
 the electrostatic field, and they study the conservation properties
 of the proposed schemes. Let us emphasize that these former works
 apply a discontinuous Galerkin method using a phase space mesh,
 whereas here we adopt this approach only in physical
 space. Furthermore,  we propose to modify the fluxes to ensure conservation of mass,
 momentum and total energy. We prove these conservations for the
 semi-discrete and fully discrete cases (first and second order time discretizations).

This paper is organized as follows: in the first part (Section \ref{sec:2}), we briefly
reformulate the Vlasov equation using the Hermite basis in velocity, recalling some properties of the
solution. Then, in Section \ref{sec:3}, we present a variant of the discontinuous
Galerkin method for the space discretization such that mass, momentum
and energy are preserved for semi-discrete and fully discrete
systems. In the last section (Section \ref{sec:4}), we present numerical results for Landau
damping, two stream instability and bump on the tail problems to
illustrate accuracy and conservations of our discretization   technique.

%
\section{Hermite spectral form of the Vlasov equation}
\label{sec:2}
\setcounter{equation}{0}
\setcounter{figure}{0}
\setcounter{table}{0}

For simplicity, we now set all the physical constants to one and consider the one dimensional Vlasov-Poisson
system for a single species with periodic boundary conditions in space, where the Vlasov equation reads
\beq
\label{vlasov}
\f{\pf}{\pt}+v\f{\pf}{\px} +E\f{\pf}{\pv} = 0\,
\eeq
with $t\geq 0$, position $x \in (0,L)$ and velocity
$v\in\RR$. Moreover, the self-consistent electric field $E$  is determined by the Poisson equation
\beq
\label{poisson}
\f{\partial E}{\px}=\rho- \rho_0\,,
\eeq
where the density $\rho$ is given by
$$
\rho(t,x)\,=\,\int_\RR f(t,x,v)\,\dD v\,, \quad t\geq 0, \, x\in (0,L)
$$
and $\rho_0$ is a constant  ensuring the quasi-neutrality condition
of the plasma
\begin{equation}
\int_{0}^L \left(\rho - \rho_0\right) \,\dD x = 0\,. 
\label{back:0}
\end{equation}

\subsection{Hermite basis}
To discretize the velocity variable, our approach  is based on spectral methods, where we
expand  the velocity part of the distribution function in basis
functions (typically Fourier or Hermite), leading to a truncated set
of moment equations for the expansion coefficients. Actually, spectral
methods are commonly used to approximate the solution to partial differential
equations \cite{boyd2001,hesthaven2007} and in particular the
Vlasov-Poisson system \cite{shoucri1974,klimas1,eliasson2003, holloway,holloway2}.

For instance, in their seminal work \cite{shoucri1974},  M. Shoucri and G. Knorr applied Chebyshev polynomials, whereas in
\cite{klimas1,eliasson2003} the authors used Fourier basis but these
methods does not conserve neither momentum, nor total energy.  Later,
J. P. Holloway  and J. W.  Schumer \cite{holloway,holloway2} applied
Hermite functions. Indeed, the product of Hermite
polynomials and a Gaussian, seems to be a natural choice for
Maxwellian-type velocity profiles \cite{funaro1}. Moreover,  compared with other
classical polynomials, they possess a fairly simple expression for
their derivatives and allow to get conservation of mass, momentum and
total energy. Recently, these methods generate a new interest
leading to new techniques to improve their efficiency \cite{le2006,
  cai2013,camporeale2016, manzini2016, cai2018, filtered} .

Let us summarize the main characteristics of this approach. The solution $f$ is approximated by a finite sum which corresponds to a truncation of a
series
\beq
\label{fseries}
f(t,x,v)=\sum_{n=0}^{N_H-1}C_n(t,x)\Psi_n(v)\,,
\eeq
where $N_H$ is the number of modes. The issue is then to
determine a well-suited class of basis functions $\Psi_n$ and to find
the expansion coefficients $C_n$. Here, we have chosen the following basis of normalized scaled {asymmetrically weighted} Hermite functions:
\beq
\label{hbasisf}
\Psi_n(v)\,=\,{H}_n\left(\frac{v}{v_{th}}\right)\,\f{e^{-\ds{v^2}/{2 v_{th}^2}}}{\sqrt{2\pi}}\,,
\eeq
where $v_{th}$ corresponds to the scaling velocity and  we have set  ${H}_{-1}(\xi)=0$, ${H}_0(\xi)=1$ and for $n\geq
1$, ${H}_n(\xi)$ has the following recursive relation 
\beq
\sqrt{n}\,{H}_n(\xi) \,=\, \xi \,{H}_{n-1}(\xi)-\sqrt{n-1}\,{H}_{n-2}(\xi)\,, \quad \forall \,n \ge 1\,.
\eeq
The Hermite basis \eqref{hbasisf}  has the following properties
\beq
\label{h1}
\int_{\RR} \Psi_n(\xi)\,\Psi_m(\xi) \,{\frac{e^{\xi^2/2}}{\sqrt{2\pi}}}\,\dD\xi \,=\, \delta_{n,m}\,, 
\eeq
and $\delta_{n,m}$ is the Kronecker delta function. With those properties,
one can substitute the expression \eqref{fseries} for $f$ into the Vlasov
equation \eqref{vlasov} and using the orthogonality property
\eqref{h1}, it yields an evolution equation for each mode $C_n$, 
\beq
\label{cn}
\f{\partial C_n}{\pt}\,+\,v_{th}\left(\sqrt{n+1}\,\f{\partial C_{n+1}}{\px}\,+\,\sqrt{n}\,\f{\partial C_{n-1}}{\px} \right)-\f{\sqrt{n}}{v_{th}}\,E\,C_{n-1} \,=\, 0\,.
\eeq
Meanwhile, we first observe that the density $\rho$ satisfies
$$
\rho \,=\, \int_\RR f \dD v \,=\, v_{th}\, C_0\,,
$$
then the Poisson equation becomes 
\beq
\label{ps}
\f{\partial E}{\px} = v_{th}\,C_0 - \rho_0\,.
\eeq
Observe that if we take $N_H=\infty$ in the expression \eqref{fseries},
we get an infinite system \eqref{cn}-\eqref{ps} of equations for $(C_n)_{n\in\NN}$ and $E$, which is
formally equivalent to the Vlasov-Poisson  system
\eqref{vlasov}-\eqref{poisson}.

\subsection{Conservation properties}

There are different kinds of Hermite approximations based
on the choice of Hermite functions and weights.  In
\cite{holloway,holloway2},  J. W. Schumer and J. P. Holloway have
discussed precisely the different choices of orthogonal
Hermite basis functions depending on the form of their weight
functions. If the basis is {asymmetrically weighted}, hence mass, momentum and total energy are preserved
but not the $L^2$ norm. {In the case of symmetrically weighted basis
functions, particles number, total energy (for $N_H$ odd) or momentum (for $N_H$ even),  as well as
$L^2$-norm are conserved} and the numerical stability is shown to be
better. Moreover, it has been shown in \cite{tang} and later in \cite{le2006}  that it is required to introduce a velocity
scaling factor to make the method more accurate and stable.  Here,
the basis is {asymmetrically weighted}, hence we prove the following result for the truncated system
\eqref{cn}-\eqref{ps}.

\begin{prop}
  \label{prop:1}
  For any $N_H\in \NN$, consider the distribution function $f$ given
  by the truncated series
  $$
f(t,x,v)=\sum_{n=0}^{N_H-1}C_n(t,x)\Psi_n(v)\,,
$$
where {$(C_n,E)$} is a solution to the Vlasov-Poisson system written
as \eqref{cn}-\eqref{ps}. Then
mass, momentum and total energy are conserved, that is, 
$$
v_{th}^{k+1}\,\frac{\dD }{\dD t}\int_0^L C_k \, \dD x \, =\, 0, \quad k=0, \,1
$$
and for the total energy,   
$$
\mE(t) \,:=\, \frac{1}{2}\,\int_{0}^L \, v_{th}^3 {\left(\sqrt{2} \,C_2
 + C_0 \right)} \,+\, |E|^2 \dD x   \,=\, \mE(0)\,.
$$
\end{prop}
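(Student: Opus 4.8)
The plan is to read off all three conservation laws directly from the truncated moment system \eqref{cn} together with the Poisson relation \eqref{ps}, exploiting periodicity in $x$ and the telescoping structure of the Hermite recurrence. Throughout I use that the periodic Poisson solve is normalized so that $E$ has zero spatial average, $\int_0^L E\,\dD x = 0$ (equivalently $E=-\partial_x\Phi$ with $\Phi$ periodic); this is consistent with the quasi-neutrality condition \eqref{back:0} that makes \eqref{ps} solvable.

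For mass and momentum I would integrate \eqref{cn} over $(0,L)$ for $k=0$ and $k=1$. The flux terms $\partial_x C_{n\pm 1}$ integrate to zero by periodicity, so the $k=0$ equation gives immediately $\tfrac{\dD}{\dD t}\int_0^L C_0\,\dD x = 0$. For $k=1$ the same argument leaves only the field term, reducing the claim to showing $\int_0^L E\,C_0\,\dD x = 0$. Here I substitute $v_{th}C_0 = \partial_x E + \rho_0$ from \eqref{ps}: the term $\int_0^L E\,\partial_x E\,\dD x = \tfrac12\int_0^L \partial_x(E^2)\,\dD x$ vanishes by periodicity, and the residual $\rho_0\int_0^L E\,\dD x$ vanishes by the zero-mean normalization. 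Multiplying by the appropriate power of $v_{th}$ yields the two stated identities $v_{th}^{k+1}\tfrac{\dD}{\dD t}\int_0^L C_k\,\dD x=0$.

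The energy is the substantive part. I split $\mE = \mE_{\mathrm{kin}} + \mE_{\mathrm{field}}$ with $\mE_{\mathrm{kin}} = \tfrac12 v_{th}^3\int_0^L(\sqrt2\,C_2 + C_0)\,\dD x$ and $\mE_{\mathrm{field}}=\tfrac12\int_0^L |E|^2\,\dD x$, and compute the two rates separately. Integrating the $k=2$ equation of \eqref{cn} over $(0,L)$ (the $\partial_x C_3$ and $\partial_x C_1$ fluxes drop by periodicity) gives $\tfrac{\dD}{\dD t}\int_0^L C_2\,\dD x = \tfrac{\sqrt2}{v_{th}}\int_0^L E\,C_1\,\dD x$, while $\tfrac{\dD}{\dD t}\int_0^L C_0\,\dD x = 0$; hence $\tfrac{\dD}{\dD t}\mE_{\mathrm{kin}} = v_{th}^2\int_0^L E\,C_1\,\dD x$. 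For the field part I need $\partial_t E$: differentiating \eqref{ps} in time and using the $k=0$ equation gives $\partial_x\bigl(\partial_t E + v_{th}^2 C_1\bigr)=0$, so $\partial_t E = -v_{th}^2 C_1 + K(t)$ for some spatially constant $K(t)$. Then $\tfrac{\dD}{\dD t}\mE_{\mathrm{field}} = \int_0^L E\,\partial_t E\,\dD x = -v_{th}^2\int_0^L E\,C_1\,\dD x + K(t)\int_0^L E\,\dD x$, and the last term again vanishes by the zero-mean normalization. Adding the two rates, the cross terms $\pm v_{th}^2\int_0^L E\,C_1\,\dD x$ cancel exactly, giving $\tfrac{\dD}{\dD t}\mE = 0$.

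The subtlety I expect to matter most is that the cancellation hinges on the $k=2$ moment equation being genuinely part of the hierarchy: the nonlinear field contribution to the kinetic-energy rate is then exactly the opposite of the one in the field-energy rate, which is precisely where the coefficient $\sqrt2$ in front of $C_2$ and the recurrence coefficients of the Hermite basis enter. I would also take care to justify that the truncation at $n=N_H-1$ does not corrupt these balances: the only place a discarded mode ($C_{N_H}$) could appear is inside an $x$-derivative flux, which integrates to zero over the periodic domain, so the top of the Hermite hierarchy contributes nothing to any of the three identities, all of which involve only the low modes $C_0$, $C_1$, $C_2$ and $E$.
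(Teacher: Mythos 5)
Your proof is correct, and the mass and momentum parts coincide with the paper's argument (integrate \eqref{cn} for $n=0,1$ over the period, substitute $v_{th}C_0=\partial_x E+\rho_0$ from \eqref{ps}, and use $\int_0^L E\,\dD x=0$, which the paper also invokes implicitly via $E=-\partial_x\Phi$ with $\Phi$ periodic). For the energy, however, you take a genuinely different route. The paper never solves for $\partial_t E$; instead it multiplies the continuity equation ($n=0$ of \eqref{cn}) by the potential $\Phi$, integrates by parts to get $v_{th}\int_0^L \partial_t C_0\,\Phi\,\dD x = -v_{th}^2\int_0^L C_1 E\,\dD x$, and then identifies the left side with $\tfrac12\tfrac{\dD}{\dD t}\int_0^L|E|^2\dD x$ through the Poisson equation $v_{th}C_0=-\partial_{xx}\Phi+\rho_0$ — a weak, test-function argument. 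You instead differentiate \eqref{ps} in time and use the $n=0$ equation to obtain the pointwise Amp\`ere-type law $\partial_t E = -v_{th}^2 C_1 + K(t)$, from which the field-energy rate follows directly; the constant $K(t)$ is killed by the zero-mean normalization of $E$. Your version is more elementary and arguably cleaner at the continuum level. What the paper's seemingly roundabout duality argument buys is that it transfers verbatim to the discontinuous Galerkin setting of Proposition \ref{prop:2}: there one cannot solve for $\partial_t E_h$ pointwise (the discrete Amp\`ere law holds only weakly, with jump and flux corrections), but one can still test the discrete continuity equation against $\Phi_h$ and the discrete Poisson equations against $C_{1,h}$ and $\partial_t E_h$, which is exactly how the discrete energy (including the extra jump term $\beta\sum_j[\Phi]^2$) is shown to be conserved. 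Your closing remark on the truncation is apt, with one small caveat shared by the statement itself: the energy identity needs $N_H\geq 3$ so that $C_2$ is an evolved mode (the later propositions state this hypothesis explicitly), and for the top mode $n=N_H-1$ the discarded flux $\partial_x C_{N_H}$ indeed integrates to zero, as you say.
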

\begin{proof}
We consider  the first three equations of \eqref{cn} given by 
\begin{equation}
  \label{c0}
  \left\{
    \begin{array}{l}
	\ds\f{\partial C_0}{\pt}\,+\,v_{th}\,\f{\partial C_1}{\px} \,=\, 0\,, \\[0.9em]
	\ds\f{\partial C_1}{\pt}\,+\,v_{th}\left(\sqrt{2} \f{\partial C_2}{\px} + \f{\partial C_0}{\px}\right) \,-\, \frac{1}{v_{th}}\,E\,C_0  \,=\, 0\,,  \\[0.9em]
	\ds\f{\partial C_2}{\pt}\,+\,v_{th}\left(\sqrt{3} \f{\partial C_3}{\px} + \sqrt{2}\f{\partial C_1}{\px}\right) - \frac{\sqrt{2}}{v_{th}} \,E\,C_1 \,=\, 0\,,
    \end{array}
  \right.
  \end{equation}
which will be related to the conservation of mass, momentum and
energy. Thus, let us take a look at the conservation properties from
\eqref{c0}, by considering periodic or compact support boundary
conditions.

First the total mass is defined as
$$
\int_0^L\int_{-\infty}^{\infty}f(t,x,v)\,\dD v\,\dD x \,=\, v_{th}\,\int_0^L C_0(t,x)\dD
x\,,
$$
hence the conservation of mass directly comes from \eqref{c0} by
integrating the first equation with respect to $x\in (0,L)$.

Then we define the  momentum as
$$
\int_0^L\int_{-\infty}^{\infty}v\,f(t,x,v)\,\dD v\,\dD x = v_{th}^2\,\int_0^L
C_1(t,x)\,\dD x\,,
$$
due to the second equation \eqref{c0}  and using the Poisson equation \eqref{ps}, we get
$$
v_{th}^2\,\f{\dD }{\dD t}  \int_0^L C_1\,\dD
x\,=\, v_{th}\,\int_0^L E\,C_0\,\dD x\,=\,\int_0^LE\left(\f{\partial
    E}{\px}+\rho_0\right)\dD x \,=\,\rho_0\int_0^L E\,\dD x\,=\,0\,,
$$
so that the conservation of momentum is obtained.

Finally to prove the conservation of total energy $W$, we compute the
variation of the kinetic energy defined as
$$
\f12\int_0^L\int_{-\infty}^{\infty}v^2 f(t,x,v)\,\dD x\,\dD
v\,=\,{\f{v_{th}^3}{2}\int_0^L\left(\sqrt{2}\,C_2\,+\,C_0\right)\,\dD x}\,.
$$
Thus, combining the first and third equations in \eqref{c0} and
integrating over $x\in (0,L)$, we get 
\beq
\label{bruges}
{\frac{v_{th}^3}{2}\,\f{\dD }{\dD t}  \int_0^L \left(\sqrt{2}\,C_2\,+\,C_0\right)\,\dD
x\,=\, v_{th}^2\,\int_0^L E\, C_1\,\dD x\,.} 
\eeq
On the other hand, multiplying the first equation in \eqref{c0} by $v_{th}\Phi$
and integrating over $x\in (0,L)$, it yields
$$
v_{th}\int_0^L \f{\partial C_0}{\partial t} \Phi \,\dD x \,=\,
- v_{th}^2\,\int_0^L \f{\partial C_1}{\partial x} \,\Phi \,\dD x \,=\,
-v_{th}^2\,\int_0^L C_1\, E \,\dD x\,.  
$$
Using the  Poisson equation \eqref{ps}, we have $v_{th}\,C_0 =
-\partial_{xx}\Phi  + \rho_0$, hence
$$
\f{1}{2}\f{\dD }{\dD t}  \int_0^L |E|^2 \,\dD x \,=\, \int_0^L
\f{\partial}{\partial t}\left(-\f{\partial^2\Phi}{\partial x^2}\right)\, \Phi \,\dD x \,=\, -v_{th}^2\,\int_0^L C_1\, E \,\dD x\,. 
$$
Combining this latter equality with \eqref{bruges}, it yields  the
conservation of total energy 
$$
\f{1}{2}\f{\dD }{\dD t}  \int_0^L \,v_{th}^3\,{\left(\sqrt{2} \,C_2 \,+\,
  C_0\right)} \,+\, |E|^2 \,\dD x  \,=\, 0\,. 
$$
\end{proof}

\begin{rem}
  \label{rem:2.1}
  The natural space associated to our asymmetrically weighted  basis is
  $$
L^2_\omega \,=\, \left\{\, u:\RR\mapsto \RR : \quad \int_\RR
  |u(v)|^2\,e^{|v|^2/2v_{th}^2}\,\dD v \,< \infty \,\right\}. 
$$
Unfortunately there is no estimate of the associated  norm for the solution to
the Vlasov-Poisson system \eqref{vlasov}-\eqref{poisson}, hence there
is no warranty  to get such estimate for the system
\eqref{cn}-\eqref{ps}. It is worth to mention that the symmetrically
weighted basis with the $L^2(\RR)$ norm would be a good choice, but it
would affect the efficiency of the conservation algorithm which will
proposed in the Section \ref{sec:3} (see Remark \ref{rem:3.1}). 
\end{rem}
\subsection{Filtering technique}
Filtering is a common procedure to reduce the effects of the Gibbs
 phenomenon inherent to spectral methods \cite{hesthaven2007}. The
 filter will  consist in multiplying some spectral coefficients in \eqref{fseries}
 by a scaling factor $\sigma$ in order to reduce the amplitude of high
 frequencies, for any $N_H\geq 4$,
 $$
 \widetilde{C}_n\,=\, C_n \,  \sigma\left(\f{n}{N_H}\right).
 $$
 Here, we simply apply a filter, called Hou-Li's filter, already proposed in
 \cite{hou2007computing} for Fourier spectral method, which reads 
 $$
 \sigma(s)\,=\,
 \left\{
   \begin{array}{ll}
     1\,, & \textrm{if\,\,} 0\leq |s|\leq 2/3\,,
     \\[1.1em]
     \exp(-\beta\,|s|^\beta)\,,&\textrm{if \,\,} |s|>2/3\,,
     \end{array}
   \right.
 $$
 where the coefficient $\beta$ is chosen as $\beta = 36$. Actually
 this filter achieved great success in Fourier \cite{hou2007computing} and Hermite  \cite{filtered}
spectral methods. We again refer to \cite{filtered} for a detailled
discussion on the application to the Vlasov-Poisson system using a Hermite spectral method.

\begin{rem}
Observe that the filter is applied only when $N_H\geq 4$, hence the
filtering process does not modify the coefficients $(C_k)_{0\leq k\leq
2}$, that is, conservations of mass, momentum and total energy are not
affected.   
\end{rem}
%
\section{Discontinuous Galerkin scheme}
\label{sec:3}
\setcounter{equation}{0}
\setcounter{figure}{0}
\setcounter{table}{0}
 This section is devoted to the space and time discretizations of the
 Vlasov-Poisson system written in the form \eqref{cn}-\eqref{ps}. We
 apply a local discontinuous Galerkin method for space and introduce a
 slight modification of the fluxes to preserve mass, momentum and
 local energy for this semi-discrete system. Then, we focus on the
 time discretization and prove the conservation for the fully-discrete case. 

\subsection{Semi-discrete discontinuous Galerkin scheme}
\label{sec:3.1}
In this section, we will define the discontinuous Galerkin (DG) scheme
for the Vlasov-Poisson system with Hermite spectral basis in velocity \eqref{cn}-\eqref{ps}. 

Let us first introduce some notations and start with
$\{\xR\}_{i=0}^{i=N_x}$, a partition of $\Omega=[x_\textrm{min},
x_\textrm{max}]$. Here $x_{\frac12}=x_\textrm{min}$,
$x_{N_x+\frac12}=x_\textrm{max}$. Each element is denoted as $I_i=[\xL,
\xR]$ with its length $h_i$, and $h=\max_i h_i$.

Given any non-negative integer $k$, we define a finite dimensional discrete piece-wise polynomial space
\begin{equation*}
V_h^k=\left\{u\in L^2(\Omega): u|_{I_i}\in P_k(I_i), \forall i\right\}\,,
\end{equation*}
where the local space $P_k(I)$ consists of polynomials of degree at
most $k$ on the interval $I$. We further denote the jump $[u]_\iR$
and the average $\{u\}_\iR$ of $u$ at $x_\iR$ defined as,
$$
[u]_\iR\,=\,{u(x_\iR^+)\,-\,u(x_\iR^-)} \quad{\rm and}\quad
\{u\}_\iR\,=\,\frac12\,\left(u(x_\iR^+)\,+\,u(x_\iR^-)\right)\,, \quad \forall\, i\,,
$$
where $u(x^\pm)=\lim_{\Delta x\rightarrow 0^\pm} u(x+\Delta x)$.  We
also denote
$$
\left\{\begin{array}{l}
         u_\iR=u(x_\iR)\,,
         \\[0.9em]
         u^\pm_\iR=u(x^\pm_\iR)\,.
\end{array}\right.
$$

From these notations, we apply a semi-discrete discontinuous Galerkin
method for \eqref{cn} as follows. We look for an approximation $C_{n,h}(t,\cdot) \in V_h^k$, such that for any $\testR_n \in V_h^k$, we have
\beq
\label{dgcn}
\f{\dD}{\dD t}\int_{I_j}C_{n,h}\,\testR_n\,\dD
x\,+\,a^j_{n}(g_{n},\testR_n)-\f{\sqrt{n}}{v_{th}}\,\int_{I_j}E\,C_{n-1,h}\,\testR_n\,\dD
x
\,=\, 0,\quad 0\le n \le N_H-1\,,
\eeq
where  $a^j_{n}$ is defined as
\beq
\label{anh}
\left\{
  \begin{array}{l}
\ds a^j_{n}(g_{n},\testR_n) \,=\, -\int_{I_j}g_{n}\,\testR'_n \,
    \dD
    x\,+\,\hat{g}_{n,j+\f12}\,\testR^-_{n,j+\f12}-\hat{g}_{n,j-\f12}\,\testR^+_{n,j-\f12}\,,
    \\[1.1em]
\ds g_{n} \,=\,v_{th}\left(\sqrt{n+1}\,C_{n+1,h}\,+\,\sqrt{n}\,C_{n-1,h}\right)\,.
\end{array}\right.
\eeq
The numerical flux $\hat{g}_{n}$ in \eqref{anh} could be taken as any consistent numerical fluxes. As an example, the global Lax-Friedrichs flux is used, which is defined as
\beq
\label{lfflx}
\hat{g}_{n}\,=\,\f12\left[g^-_{n}+g^+_{n}-\alpha\,\left(C^+_{n,h}\,-\,C^-_{n,h}\right)\right]\,,
\eeq
with the numerical viscosity coefficient $\alpha=v_{th}\sqrt{N_H}$.

For the Poisson equation \eqref{ps}, we  introduce the potential function $\Phi(t,x)$, such that
$$
\left\{
\begin{array}{l}
\ds E\,=\,-\f{\partial \Phi}{\px}\,, \\[0.9em]
\ds\f{\partial E}{\px} \,=\, v_{th}\,C_0 - \rho_0\,.
\end{array}\right.
$$
Hence we get the one dimensional Poisson equation 
$$
-\f{\partial^2 \Phi}{\px^2} \,=\, v_{th}\,C_0 \,-\, \rho_0\,,
$$
for which we also define a local discontinuous Galerkin  scheme:  we
look for a couple $\Phi_{h}(t,\cdot), E_h(t,\cdot) \in V_h^k$, such
that for any $\testP$ and $\testE \in V_h^k$, we have 
\beq
\label{dgP}
\left\{
\begin{array}{l}
\ds+\int_{I_j}\Phi_h\,\testP'\,\dD x \, -\,
  \hat{\Phi}_{h,j+\f12}\,\testP^-_{j+\f12} \,+\,\hat{\Phi}_{h,j-\f12}\,\testP^+_{j-\f12}\,=\,\int
  E_h\,\testP\,\dD x\,,
  \\[1.1em]
\ds -\int_{I_j}E_h\,\testE'\,\dD x \,+\,
  \hat{E}_{h,j+\f12}\,\testE^-_{j+\f12}
  \,-\,\hat{E}_{h,j-\f12}\,\testE^+_{j-\f12}\,=\,\int \left(
 v_{th}\, C_{0,h}\,-\,\rho_0\right)\,\testE\,\dD x\,,
\end{array}\right.
\eeq
where the numerical fluxes $\hat{\Phi}_h$ and $\hat{E}_h$ in
\eqref{dgP}  here are taken as in \cite{ayuso2009,ayuso2012},
\beq
\label{fluxps}
\left\{
\begin{array}{l}
  \hat{\Phi}_h \,=\, \{\Phi_h\}\,, \\[0.9em]
  \hat{E}_h \,=\, \{E_h\}\,-\,\beta\,[\Phi_h]\,,
\end{array}\right.
  \eeq
with $\beta$ either being a positive constant or a constant multiplying by $1/h$ and $h$ is the mesh size.

\subsection{Conservation properties for the semi-discrete scheme}
\label{sec:3.2}
The local discontinuous Galerkin method presented in the previous
section naturally preserves the mass since the equation on $C_0$ only
contains a convective term. Unfortunately, the momentum and total energy
are not conserved due to the contribution of the source
terms in \eqref{dgcn}. Therefore, we propose a slight modification of the fluxes for
the unknowns $(C_1,C_2)$ in order to ensure the right conservations.

In this subsection, without any confusion, we will drop the subindex
$h$ for simplicity. The aim is to  modify the scheme \eqref{dgcn}-\eqref{fluxps}
in order to ensure the conservation of mass, momentum and energy
without deteriorating the order of accuracy. Thus, we follow the proof
of Proposition \ref{prop:1} and consider the scheme \eqref{dgcn} for
$n=1,2$ combined with \eqref{dgP}, by adding two residual terms to ensure the
conservation of momentum and total energy
\beq
\label{dgc0}
\left\{
  \begin{array}{l}
\ds\f{\dD }{\dD t}\int_{I_j}C_1\,\testR_1\,\dD x\,+\,a^j_1(g_1,\testR_1)
    \,-\,\f{1}{v_{th}}\,\int_{I_j}E\,C_0\,\testR_1\,\dD x \,=\, \langle
    r^j_{1}, \,\phi_1\rangle\,,
  \\[0.9em]
\ds\f{\dD}{\dD t}\int_{I_j}C_2\,\testR_2\,\dD x\,+\,a^j_2(g_2,\testR_2)\,-\,\f{\sqrt{2}}{v_{th}}\,\int_{I_j}E\,C_1\,\testR_2\,\dD
    x
    \,=\, \langle  r^j_2, \phi_2 \rangle\,,
\end{array}\right.
\eeq
and the residual terms  $(r_1,r_2)$ are given by
\beq
\label{dgc0-r}
\left\{
  \begin{array}{l}
    \langle r^j_1,\testR_1\rangle \,=\, -\ds\frac{\beta}{2\,v_{th}^2}\,\left( [\Phi]\,[E]_{j-\f12}\, \testR^+_{1,j-\f12}\,+\, [\Phi]\,[E]_{j+\f12}\,\testR^-_{1,j+\f12}\right)\,,
\\[1.1em]
   \langle r^j_2,\testR_2\rangle \,=\, \ds\frac{1}{\sqrt{2}\,v_{th}}\,\left(
    \left(\{C_1\}-\hat{C}_1\right)\,[\Phi]_{j-\f12}\,\testR^+_{2,j-\f12} \,+\, \left(\{C_1\}-\hat{C}_1\right)\,[\Phi]_{j+\f12}\, \testR^-_{2,j+\f12}\right)\,. 
\end{array}\right.
\eeq
Apparently, these residues are given as products of two cell
interface jump terms, which both are of high order, hence it should
not deteriorate the order of accuracy. Therefore,  we prove that this
choice will ensure the  discrete  conservation properties of the scheme.

\begin{prop}
  \label{prop:2}
  For any {$N_H \ge 3 \in \NN$}, we consider the solution $(C_n,E,\Phi)$ to the
  system \eqref{dgcn}-\eqref{fluxps} combined with the residual terms
  \eqref{dgc0}-\eqref{dgc0-r}. Then, discrete mass and momentum are
  conserved
  $$
v_{th}^{k+1}\,\frac{\dD }{\dD t}\int_0^L C_k \, \dD x \, =\, 0, \quad k=0, \,1
$$
and the discrete total energy,  defined as
$$
\mE_h(t) \,:=\, \frac{1}{2}\,\int_{0}^L \, v_{th}^3 {\left(\sqrt{2} \,C_2
 + C_0 \right)} \,+\, |E|^2 \dD x  \,+\, \frac12 \,\beta\, \sum_{j}[\Phi]_{j-\f12}^2\,,
$$
is also preserved with respect to time.
\end{prop}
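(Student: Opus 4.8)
The plan is to reproduce, at the discrete level, the three steps of the proof of Proposition \ref{prop:1}, using constant test functions for mass and momentum and an energy pairing for the total energy, and to check that the residual terms \eqref{dgc0-r} are tailored to absorb exactly the interface contributions produced by the discontinuous fluxes. Throughout, the periodic boundary conditions make every telescoping flux sum vanish.

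For the mass I would take $n=0$ in \eqref{dgcn} with $\testR_0\equiv 1$; since $g_0=v_{th}C_1$ carries no source term, only the numerical flux survives, and upon summation over $j$ it telescopes and vanishes, giving $\f{\dD}{\dD t}\int_0^L C_0\,\dD x=0$. For the momentum I would take $n=1$ in \eqref{dgc0} with $\testR_1\equiv 1$; the flux again telescopes, leaving $v_{th}^2\,\f{\dD}{\dD t}\int_0^L C_1\,\dD x = v_{th}\int_0^L E\,C_0\,\dD x + v_{th}^2\sum_j\langle r_1^j,1\rangle$. The essential point is to evaluate $\int_0^L E\,C_0\,\dD x$ through the discrete Poisson scheme: testing the second line of \eqref{dgP} with $\testE=E_h$ and reorganising the interface sums with $\hat E_h=\{E_h\}-\beta[\Phi_h]$, one gets $v_{th}\int_0^L E\,C_0\,\dD x=\beta\sum_j[\Phi]_{\jL}[E]_{\jL}$, after using $\int_0^L E_h\,\dD x=0$ (take $\testP\equiv1$ in the first line of \eqref{dgP}) so the $\rho_0$ term drops. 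Since $\sum_j\langle r_1^j,1\rangle=-\f{\beta}{v_{th}^2}\sum_j[\Phi]_{\jL}[E]_{\jL}$, the two contributions cancel and momentum is conserved; this is precisely the purpose of $r_1$.

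For the total energy I would split $\mE_h$ into a kinetic part $\f{v_{th}^3}{2}\int_0^L(\sqrt2\,C_2+C_0)\,\dD x$ and an electric part $\tilde{\mE}_{el}:=\f12\int_0^L|E|^2\,\dD x+\f12\beta\sum_j[\Phi]_{\jL}^2$. Taking the combination $\sqrt2\times(n{=}2)+(n{=}0)$ with constant test functions, the fluxes telescope, the sources assemble into $v_{th}^2\int_0^L E\,C_1\,\dD x$, and the residual produces $\f{v_{th}^3}{\sqrt2}\sum_j\langle r_2^j,1\rangle=v_{th}^2\sum_j(\{C_1\}-\hat C_1)_{\jL}[\Phi]_{\jL}$, giving the kinetic balance $\f{v_{th}^3}{2}\f{\dD}{\dD t}\int_0^L(\sqrt2\,C_2+C_0)\,\dD x=v_{th}^2\int_0^L E\,C_1\,\dD x+v_{th}^2\sum_j(\{C_1\}-\hat C_1)_{\jL}[\Phi]_{\jL}$. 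For the electric part I would mirror the continuous argument by testing the $C_0$ equation with $v_{th}\Phi_h$; integrating by parts cell by cell and inserting the first line of \eqref{dgP} with $\testP=C_1$ to rewrite $\sum_j\int_{I_j}C_1\,\Phi'\,\dD x$, I expect to reach $v_{th}\int_0^L\partial_t C_0\,\Phi\,\dD x=-v_{th}^2\int_0^L E\,C_1\,\dD x-v_{th}^2\sum_j(\{C_1\}-\hat C_1)_{\jL}[\Phi]_{\jL}$, whose interface term is exactly the opposite of the one coming from $r_2$.

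The remaining step, identifying $v_{th}\int_0^L\partial_t C_0\,\Phi\,\dD x$ with $\f{\dD}{\dD t}\tilde{\mE}_{el}$, is the one I expect to be the main obstacle, since $E_h$ and $\Phi_h$ are not pointwise related by $E_h=-\Phi_h'$ but only through the LDG coupling \eqref{dgP}. The clean way is to observe that, after eliminating $E_h$ via the first line of \eqref{dgP}, the scheme defines a symmetric bilinear form $A(\cdot,\cdot)$ on $V_h^k$ with $\f12 A(\Phi,\Phi)=\tilde{\mE}_{el}$, the symmetry and the energy identity being standard properties of the fluxes \eqref{fluxps} (cf. \cite{ayuso2009,ayuso2012}). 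Differentiating \eqref{dgP} in time shows that $\partial_t E_h$ is the discrete gradient of $\partial_t\Phi_h$, so testing the time-differentiated second line with $\testE=\Phi_h$ yields $A(\partial_t\Phi,\Phi)=v_{th}\int_0^L\partial_t C_0\,\Phi\,\dD x$, while symmetry gives $\f{\dD}{\dD t}\tilde{\mE}_{el}=\f{\dD}{\dD t}\,\f12 A(\Phi,\Phi)=A(\partial_t\Phi,\Phi)$. Combining this with the kinetic balance, the two $\int_0^L E\,C_1$ terms and the two interface sums cancel in pairs, so $\f{\dD}{\dD t}\mE_h=0$. The only delicate bookkeeping is the reindexing of one-sided interface values into jumps and averages and the verification that $A$ is indeed symmetric; once those are in place, the cancellations are forced by the very definition of $r_1$ and $r_2$.
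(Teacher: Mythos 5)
Your proposal is correct, and for mass, momentum, the kinetic balance, and the pairing identity (testing the $C_0$-equation with $\Phi/v_{th}$ and the first equation of \eqref{dgP} with $\testP=C_1$) it coincides step by step with the paper's proof; the one place you genuinely diverge is the final identification of $v_{th}\int_0^L\partial_t C_0\,\Phi\,\dD x$ with $\f{\dD}{\dD t}\bigl[\f12\int_0^L|E|^2\,\dD x+\f{\beta}{2}\sum_j[\Phi]^2_{\jL}\bigr]$. The paper does this concretely: it differentiates the second equation of \eqref{dgP} in time, tests it with $\testE=\sqrt2\,\Phi/v_{th}^3$, tests the first equation with $\testP=-\f{\sqrt2}{v_{th}^3}\partial_t E$, adds, and closes with the flux algebra $\left(\{\Phi\}-\hat\Phi\right)[\partial_t E]+\bigl(\{\partial_t E\}-\widehat{\partial_t E}\bigr)[\Phi]=\beta\,[\partial_t\Phi][\Phi]=\f{\beta}{2}\f{\dD}{\dD t}[\Phi]^2$. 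You instead eliminate $E_h$ and invoke the symmetric primal form of the LDG discretization, $A(\Phi,\testE)=\int_0^L G(\Phi)\,G(\testE)\,\dD x+\beta\sum_j[\Phi][\testE]$ with $G$ the discrete gradient defined by the first equation of \eqref{dgP}, so that $\f12 A(\Phi,\Phi)$ is exactly the electric part of $\mE_h$ and, by linearity and symmetry, $\f{\dD}{\dD t}\f12 A(\Phi,\Phi)=A(\partial_t\Phi,\Phi)=v_{th}\int_0^L\partial_t C_0\,\Phi\,\dD x$. Both routes are valid; in fact the paper's computation is precisely the verification of $A(\partial_t\Phi,\Phi)=\f{\dD}{\dD t}\f12 A(\Phi,\Phi)$ without ever naming $A$. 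Your formulation is more conceptual: it explains why the jump penalty $\f{\beta}{2}\sum_j[\Phi]^2_{\jL}$ must appear in the discrete energy (it is part of the LDG energy norm), and it would carry over verbatim to any other symmetric DG discretization of the Poisson equation. What it costs is self-containedness: the symmetry of $A$ and the identity $\f12 A(\Phi,\Phi)=\f12\int_0^L|E|^2\,\dD x+\f{\beta}{2}\sum_j[\Phi]^2_{\jL}$ are only cited, and proving them requires the same summation by parts and the identity $[\Phi\,\testE]=\{\Phi\}[\testE]+\{\testE\}[\Phi]$ that the paper deploys directly, so in a fully written-out version the two proofs would be of comparable length.
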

\begin{proof}
On the one hand, the conservation of mass easily follows by choosing $\phi_0=1$
in  \eqref{dgcn} for $n=0$ and summing over $j$, hence since there is no source term we get
	$$
	\sum_j \f{\dD}{\dD t}\int_{I_j} C_0\,\dD x \,=\, 0\,.
	$$
        On the other hand, for the conservation of momentum, taking
$\phi_1=1$ in the modified scheme \eqref{dgc0}  and summing over
$j$,  we have
\beq
\label{co:0}
	\f{\dD}{\dD t}\int_0^LC_1\,\dD
        x\,=\,\f{1}{v_{th}}\,\int_0^LE\,C_0\,\dD x \,+\, \sum_j
        \langle r_1^j, 1\rangle\,,
	\eeq
        where the contribution of the residual term, using periodic boundary
        conditions, gives
        \beq
          \sum_j \langle r_1^j,1\rangle \,=\, -\sum_j  \ds\frac{\beta}{2\,v_{th}^2}\,\left( [\Phi]\,[E]_{j-\f12}\,+\, [\Phi]\,[E]_{j+\f12}\right)
        \, =\, -\ds\frac{\beta}{v_{th}^2}\,\sum_j  
                [\Phi]\,[E]_{j-\f12}\,.
                \label{co:1}
        \eeq
	Then we compute the contribution of the first source term on the
        right hand side of \eqref{co:0}. Choosing $\testP=1$ in the first equation of
        \eqref{dgP}, we get that
        $$
\int_{I_j}E \,\dD x \,=\, -\hat{\Phi}_{j+\f12}+\hat{\Phi}_{j-\f12}\,, 
$$
whereas taking $\testE=E$ in the second equation of \eqref{dgP},   we have
	$$
	-\int_{I_j}E\, \f{\partial E}{\partial x}\,\dD x \,+\,
        \hat{E}_{j+\f12}\,E^-_{j+\f12}\,-\,\hat{E}_{j-\f12}\,E^+_{j-\f12}\,=\,\int_{I_j}(v_{th}\,C_0\,-\,\rho_0)\,E\,\dD
        x\,,
	$$
       hence adding  the latter both equalities, it yields 
	\begin{align*}
	v_{th}\,\int_{I_j}E\,C_0\,\dD x&=\,-\f12\int_{I_j}\f{\partial
                                         E^2}{\partial x}\,\dD
                                         x\,+\,
                                         \hat{E}_{j+\f12}E^-_{j+\f12}\,-\,\hat{E}_{j-\f12}E^+_{j-\f12}\,+\,\rho_0\,\int_{I_j}E\,\dD
                                         x\, \\
	&=\,-\f12(E^2)^-_{j+\f12}\,+\,\f12(E^2)^+_{j-\f12}\,+\, \hat{E}_{j+\f12}E^-_{j+\f12}\,-\,\hat{E}_{j-\f12}E^+_{j-\f12}\,+\,\rho_0\,(\hat{\Phi}_{j+\f12}-\hat{\Phi}_{j-\f12})\,.
	\end{align*}
   Summing on $j$, we obtain
   \beq
   \label{co:2}
   v_{th}\,\int_0^LE\,C_0\,\dD x\,=\,\sum_j\left(\f12[E^2]\,-\,\hat{E}\,[E]\right)_{j-\f12}\,=\,\sum_j\left(\{E\}-\hat{E}\right)_{j-\f12}\,[E]_{j-\f12}\,.
   \eeq
Finally, using the definition of the flux $\hat{E}$  in
\eqref{fluxps}, we have
   $$
   \left(\{E\}-\hat{E}\right)_{j-\f12}\,[E]_{j-\f12}\,=\,\beta\,[\Phi]\,[E]_{j-\f12}\,,
   $$ 
hence gathering \eqref{co:1} and \eqref{co:2} in \eqref{co:0}, we get the
conservation of momentum
$$
\f{\dD}{\dD t}\int_0^LC_1\,\dD
        x\,=\, 0\,.
$$

 Now let us show the conservation of total energy. Choosing
 $\phi_2=1$  in the modified scheme \eqref{dgc0}   and summing over
$j$,  we get 
    \beq
   \label{mo:0}
   \f{\dD}{\dD t}\int_0^L C_2\,\dD
   x\,=\,\f{\sqrt{2}}{v_{th}}\,\int_0^L E\,C_1\,\dD x \,+\,
   \sum_{j}\langle r_2^j, 1\rangle\,,
   \eeq
      where the contribution of the residual term, using periodic boundary
        conditions, gives
\beq
        \sum_j \langle r_2^j,1\rangle \,=\, \ds\frac{\sqrt{2}}{v_{th}}\, \sum_j      \left(\left(\{C_1\}-\hat{C}_1\right)\,[\Phi]\right)_{j-\f12}\,.
 \label{mo:1}
 \eeq
 Let us evaluate the first term on the right hand side of
 \eqref{mo:0}. We consider the  first equation of \eqref{dgP} and take $\testP=C_1$, 
$$
   \int_{I_j}E\,C_1\,\dD x\,=\,\int_{I_j}\Phi\,\f{\partial
     C_1}{\partial x}\,\dD x\,-\,\hat{\Phi}_{j+\f12}(C_1)^-_{j+\f12}\,+\,\hat{\Phi}_{j-\f12}(C_1)^+_{j-\f12}\,,
 $$
   whereas in \eqref{dgcn}, we choose $\phi_0=\Phi/v_{th}$, 
   $$
   \f{1}{v_{th}}\,\int_{I_j}\f{\partial C_0}{\partial t}\,\Phi\,\dD x \,=\, \int_{I_j}C_1\,\f{\partial \Phi}{\partial
     x}\,\dD x\,-\,\hat{C}_{1,j+\f12}\,\Phi^-_{j+\f12}\,+\,\hat{C}_{1,j-\f12}\,\Phi^+_{j-\f12}\,.
   $$
  Adding the latter two equalities, it yields 
   \begin{align*}
   &\int_{I_j}E\,C_1\,+\,\f{1}{v_{th}}\,\f{\partial C_0}{\partial t}\,\Phi\,\dD
     x\,
     \\[0.9em]
     &\,=\, \int_{I_j}\f{\partial  (\Phi \, C_1)}{\partial x}\,\dD
           x \;-\,\hat{\Phi}_{j+\f12}\,(C_1)^-_{j+\f12}\,+\,\hat{\Phi}_{j-\f12}\;(C_1)^+_{j-\f12}\,-\,\hat{C}_{1,j+\f12}\,\Phi^-_{j+\f12}\,+\,\hat{C}_{1,j-\f12}\,\Phi^+_{j-\f12}\,
     \\[0.9em] 
         &\,=\,(\Phi \,C_1)^-_{j+\f12}-(\Phi \,C_1)^+_{j-\f12} \,-\,\hat{\Phi}_{j+\f12}\,(C_1)^-_{j+\f12}\,+\,\hat{\Phi}_{j-\f12}\,(C_1)^+_{j-\f12} \,-\,\hat{C}_{1,j+\f12}\;\Phi^-_{j+\f12}\,+\,\hat{C}_{1,j-\f12}\,\Phi^+_{j-\f12}\,. 
   \end{align*}
   Summing on $j$ and using periodic boundary conditions, we get
   $$
      \int_0^L \left[ E\,C_1\,+\, \f{1}{v_{th}}\,\f{\partial C_0}{\partial t}\,\Phi\right]\,\dD x\,=\,-\sum_j\left([\Phi\; C_1]\,-\,\hat{\Phi}\,[C_1]\,-\,\hat{C}_1\,[\Phi]\right)_{j-\f12}\,.
   $$
  To evaluate the right hand side of the latter term, we use that
   $$
[\Phi C_1]\,-\,\hat{\Phi}\,[C_1]\,-\,\hat{C}_1\,[\Phi] \,=\,\left(\{\Phi\}-\hat{\Phi}\right)\,[C_1]\,+\,\left(\{C_1\}\,-\,\hat{C}_1\right)\,[\Phi]\,,
   $$
   and from the definition of the flux for the Poisson equation
   \eqref{fluxps}, we get 
   \beq
   \label{mo:2}
   \f{\sqrt{2}}{v_{th}}\,\int_0^L \left[ E\,C_1\,+\, \f{1}{v_{th}}\,\f{\partial C_0}{\partial t}\,\Phi\right]\,\dD x\,=\,-\f{\sqrt{2}}{v_{th}}\,\sum_j\left(\left(\{C_1\}\,-\,\hat{C}_1\right)\,[\Phi]\right)_{j-\f12}\,.
   \eeq
   Now, it remains to evaluate the second term on the left hand side
   in \eqref{mo:2},   hence we compute the time derivative   of the second equation in
   \eqref{dgP} and choose  the test function $\testE=\sqrt 2\,\Phi/v_{th}^3$, it gives
$$
   \f{\sqrt{2}}{v_{th}^2}\,\int_{I_j}\f{\partial C_0}{\partial t}\,\Phi\,\dD x \,=\,
  \f{\sqrt{2}}{v_{th}^3}\,\left[-\int_{I_j}\f{\partial E}{\partial t}\,\f{\partial\Phi}{\partial
     x}\,\dD x\,+\, \left(\widehat{\f{\partial E}{\partial t}}\right)_{j+\f12}\,\Phi^-_{j+\f12}\,-\,\left(\widehat{\f{\partial
         E}{\partial t}}\right)_{j-\f12}\,\Phi^+_{j-\f12}\right]\,.
 $$
   Then  in the first equation of \eqref{dgP}, we take
  $\testP=-\f{\sqrt 2}{v_{th}^3}\f{\partial E}{\partial t}$ and obtain
\begin{eqnarray*}
 -\f{1}{\sqrt 2\,v_{th}^3}\,\f{\dD}{\dD
     t}\int_{I_j}E^2\,\dD x &=&-\f{\sqrt 2}{v_{th}^3}\int_{I_j}E
                                \,\f{\partial E}{\partial t}\,\dD x
  \\
&=&  -\f{\sqrt 2}{v_{th}^3}\left( \int_{I_j}\Phi\,\f{\partial^2 E}{\partial t\partial x}\,\dD
   x\,-\,\hat{\Phi}_{j+\f12}\,\left(\f{\partial E}{\partial
     t}\right)^-_{j+\f12}\,+\,\hat{\Phi}_{j-\f12}\,\left(\f{\partial E}{\partial
  t}\right)^+_{j-\f12}\right)\,.
 \end{eqnarray*}
 Adding the latter two equalities and summing over $j$, we get
 $$ 
   -\f{\sqrt 2}{v_{th}^2}\,\left[\f{1}{2\,v_{th}}\,\f{\dD}{\dD t}\int_0^L E^2\dD x \,-\,\int_0^L\f{\partial
     C_0}{\partial t}\,\Phi\,\dD
   x\right]\,=\,\f{\sqrt 2}{v_{th}^3}\,\sum_j\left(\left[\Phi \f{\partial
         E}{\partial t}\right]\,-\,\widehat{\f{\partial E}{\partial t}}\,[\Phi]\,-\,\hat{\Phi}\left[\f{\partial E}{\partial
         t}\right] \right)_{j-\f12}\,.
  $$
   Again using that,
   \begin{align*}
\left[\Phi \f{\partial E}{\partial
    t}\right]\,-\,\widehat{\f{\partial E}{\partial t}}\,[\Phi] \,-\,\hat{\Phi}\,\left[\f{\partial E}{\partial
    t}\right]
    &\,=\,\left(\{\Phi\}-\hat{\Phi}\right)\left[\f{\partial E}{\partial
      t}\right]\,+\,\left(\left\{\f{\partial E}{\partial
      t}\right\}\,-\,\widehat{\f{\partial E}{\partial t}}\right)\,[\Phi]\,
     \\
     &\,=\, \beta \,\left[\f{\partial \Phi}{\partial t}\right][\Phi]\,
       \\
     &\,=\,\f{\beta}{2} \,\f{\dD}{\dD t} [\Phi]^2\,,
   \end{align*}
   we get the time derivative of the discrete potential energy,
   \beq
   \label{mo:3}
 \f{1}{\sqrt{2}\,v_{th}^3}\,\f{\dD}{\dD t}
   \left[\int_0^L E^2\dD x + \beta\,\sum_j [\Phi]_{j-\f12}^2\right] \,=\, \f{\sqrt 2}{v_{th}^2}\,\int_0^L\f{\partial
     C_0}{\partial t}\,\Phi\,\dD
   x\,.
   \eeq
Gathering \eqref{mo:1}, \eqref{mo:2} and \eqref{mo:3} in \eqref{mo:0}, we get
 $$
   \f{\dD}{\dD t}\left[\int_0^L \left(C_2 \,+\;\f{1}{\sqrt 2
         \,v_{th}^3}\,E^2\right)\,\dD x \,+\, \f{\beta}{\sqrt 2
         \,v_{th}^3}\,\sum_{j} [\Phi]^2_{j-\f12} \right] \,=\, 0\,.
     $$
     Finally, using the conservation of mass and the latter result, we
     get the conservation of discrete energy
     $$
 \f12\,\f{\dD}{\dD t}\left[\int_0^L
         \,v_{th}^3\,{\left( \sqrt 2 C_2 \,+\; C_0\right)}  \,+\,
         |E|^2\,\dD x \,+\, \beta\,\sum_{j} [\Phi]^2_{j-\f12} \right] \,=\, 0\,.
     $$
 \end{proof}

 \begin{rem}
  \label{rem:3.1}
It is possible to apply the same method considering  the symmetrically
weighted basis with the $L^2(\RR)$ norm in order to design a
conservative discontinuous Galerkin method. However, the residue
$(r_1,r_2)$ would involve all the modes $(C_n)_{0\leq n< N_H}$ causing
an additional computational cost. This is the main reason why we favor
the asymmetrically weighted basis.
\end{rem}

\subsection{Time discretization}
\label{sec:3.3}
In this section, we propose a  time discretization to the modified
semi-discrete scheme \eqref{dgcn}-\eqref{fluxps}, with
\eqref{dgc0}-\eqref{dgc0-r} to ensure the conservation of momentum and
energy. As it has been shown in the proof of Proposition \ref{prop:2},
the conservation of momentum can be obtained by a simple modification
of the source term, whereas the conservation of energy is more tricky
since it involves time derivative. Therefore,  the time discretization
is an issue for the conservation of energy.

We denote  by $\bC=(C_0,\ldots,C_{N_H-1})$ the solution to
\eqref{dgcn}-\eqref{dgc0-r} and by $(\cdot,\cdot)$ the standard $L^2$ inner
product on the interval $(0,L)$
$$
(C_n\,,\;\phi)\,:=\,\int_0^L C_n\,\phi\,\dD x
$$
and
 $(\cdot,\cdot)_I$ the standard $L^2$ inner
product on the interval $I$
$$
(C_n\,,\;\phi)_I\,:=\,\int_I C_n\,\phi\,\dD x\,.
$$
Then for the computation of the source terms, we introduce $b_n^j$
given by, 
$$
b_n^j(\bC,E,\Phi,\phi_n) \,=\,
\left\{
  \begin{array}{ll}
    0\,, & \textrm{\,if \,} n = 0\,,
    \\[1.1em]
    \ds-\f{1}{v_{th}}\,(E\,C_0\,,\;\phi_1)_{I_j} \,-\,  \langle r^j_1,\testR_1\rangle\,, & \textrm{\,if \,} n = 1\,,
   \\[1.1em]
    \ds-\f{\sqrt 2}{v_{th}}\,(E\,C_1\,,\;\phi_2)_{I_j}\,-\,  \langle r^j_2,\testR_2\rangle\,, & \textrm{\,if \,} n = 2\,,
   \\[1.1em]
    \ds-\f{\sqrt n}{v_{th}}\,(E\,C_{n-1}\,,\;\phi_n)_{I_j}\,, &
                                                                   \textrm{\,if
                                                                   \,}
                                                                   3\leq
                                                                   n
                                                                   \leq
                                                                   N_H-1\,,
    \end{array}\right.
$$
where $(r_1,r_2)$ is defined in \eqref{dgc0-r}. We set {$a_n=\sum_j a_n^j$
and $b_n=\sum_j b_n^j$}. Using these notations,
 the semi-discrete system \eqref{dgcn}-\eqref{dgc0-r} can be written
 in the compact form for the time evolution of $(C_n)_{0\leq n< N_H}$,
 \beq
 \label{dgcn-new}
\f{\dD }{\dD t} (C_n,\phi_n) \,+\, a_n(g_n,\phi_n) \,+\,
b_n(\bC,E,\Phi,\phi_n)\, =\, 0, \quad \forall \; \phi_n\,\in\,  V_h^k\,,
 \eeq
coupled with the local discontinuous Galerkin scheme for the Poisson
equation \eqref{dgP}-\eqref{fluxps}.

We now present a first and second order time discretizations to
\eqref{dgcn-new} preserving the conservation properties. For
simplicity we fix an interval $I$, for which we
drop the index $j$ and choose a time step $\Delta t$. We compute an
approximation $\bC^m=(C_0^m,\ldots,C_{N_H-1}^m)$ of the solution $\bC$ at time $t^m\,=\,m\,\Delta t$, for
$m\geq 0$.

\subsubsection*{First order scheme}

We  first  apply a classical explicit Euler scheme
for any $n\in\{0,\ldots,\,N_H-1\}\setminus\{2\}$ and $m\geq 0$,
\beq
\label{euler-1}
\f{(C_n^{m+1}-C_n^{m},\phi_n)}{\Delta t} \,+\,a_n(g_n^m,\phi_n) \,+\,
b_n(\bC^m,E^m,\Phi^m,\phi_n)\, =\, 0, \quad \forall \; \phi_n\,\in\,
V_h^k\,.
\eeq
From this system, we get the discrete density $C_0^{m+1}$ at time
$t^{m+1}$ and can apply the local discontinuous Galerkin scheme
\eqref{dgP}-\eqref{fluxps} for
the computation of the electric field and potential
$(E^{m+1},\Phi^{m+1})$, hence we compute
$$
E^{m+\f12} = \f12\,\left(E^m+E^{m+1}\right)\,, \quad \Phi^{m+\f12} = \f12\,\left(\Phi^m+\Phi^{m+1}\right)\,, 
$$
and the unknown $C_2^{m+1}$, given by
\beq
\label{euler-2}
\f{ (C_2^{m+1}-C_2^{m},\phi_2)}{\Delta t} \,+\,a_2(g_2^m,\phi_2) \,+\,
b_2(\bC^m,E^{m+\f12},\Phi^{m+\f12},\phi_n)\,=\, 0, \quad \forall \; \phi_2\,\in\,
V_h^k\,.
\eeq

Now let us prove mass, momentum and energy conservation for this fully discrete scheme, under periodic or compact support boundary conditions.
\begin{prop}
  \label{prop:3}
  For any {$N_H \ge 3 \in \NN$}, we consider the solution $(\bC^m,E^m,\Phi^m)_m$ to the
  system \eqref{euler-1}-\eqref{euler-2}. Then, discrete mass and momentum are
  conserved for any $m\geq 0$,
  $$
v_{th}^{k+1}\,\int_0^L C_k^m \, \dD x \, =\, v_{th}^{k+1}\,\int_0^L C_k^0 \, \dD x , \quad k=0, \,1
$$
and the discrete total energy,  defined as
$$
\mE_h^m \,:=\, \frac{1}{2}\,\int_{0}^L \, v_{th}^3 {\left(\sqrt{2}\, C_2^m
 + C_0^m\right)} \,+\, |E^m|^2 \dD x  \,+\,\frac12\,\beta\, \sum_{j}[\Phi^m]_{j-\f12}^2\,,
$$
is also preserved with respect to $m\geq 0$.
\end{prop}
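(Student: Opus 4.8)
The plan is to follow closely the proof of Proposition \ref{prop:2}, replacing the continuous time derivatives by their discrete increments and exploiting the special choice of the midpoint fields $E^{m+\f12}$ and $\Phi^{m+\f12}$ in \eqref{euler-2}. I would establish the three conservation laws in increasing order of difficulty: mass, then momentum, then energy.

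For mass I would take $\testR=1$ in the $n=0$ equation of \eqref{euler-1}. Since $b_0=0$ and $\sum_j a_0^j(g_0^m,1)$ telescopes to zero under periodic boundary conditions, one gets $\int_0^L C_0^{m+1}\,\dD x=\int_0^L C_0^m\,\dD x$ at once. For momentum I would take $\testR=1$ in the $n=1$ equation. The decisive observation is that momentum conservation in Proposition \ref{prop:2} is a purely spatial (algebraic) identity, with no time derivative involved; since every field entering $b_1(\bC^m,E^m,\Phi^m,1)$ — namely $E^m$, $C_0^m$, and the residual $r_1$ built from $[\Phi^m]$ and $[E^m]$ — lives at the single time level $m$, the discrete Poisson relations \eqref{dgP}--\eqref{fluxps} at time $m$ reproduce verbatim the cancellations \eqref{co:1}--\eqref{co:2}. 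Hence $\sum_j\langle r_1^j,1\rangle$ exactly balances $\f{1}{v_{th}}\int_0^L E^m\,C_0^m\,\dD x$, giving $\int_0^L C_1^{m+1}\,\dD x=\int_0^L C_1^m\,\dD x$.

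The energy estimate is the main obstacle, and it is where the midpoint fields are essential. I would take $\testR=1$ in \eqref{euler-2} and sum over $j$, producing the increment of the kinetic contribution $\f{v_{th}^3}{2}(\sqrt2\,C_2+C_0)$ together with the source $\f{\sqrt2}{v_{th}}\int_0^L E^{m+\f12}C_1^m\,\dD x$ and the residual $\sum_j\langle r_2^j,1\rangle$ evaluated with $\Phi^{m+\f12}$. To convert the source term I would use the first equation of \eqref{dgP} — which, being linear, also holds for the averaged fields $(E^{m+\f12},\Phi^{m+\f12})$ — tested against $\testP=C_1^m$, and combine it with the $n=0$ equation of \eqref{euler-1} tested against $\testP=\Phi^{m+\f12}/v_{th}$. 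This is the discrete analogue of \eqref{mo:2}, and it produces the cross term $\f{1}{v_{th}\,\Delta t}\int_0^L (C_0^{m+1}-C_0^m)\,\Phi^{m+\f12}\,\dD x$ in place of the continuous $\f{1}{v_{th}}\int_0^L\partial_t C_0\,\Phi\,\dD x$.

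It then remains to reproduce identity \eqref{mo:3} at the discrete level, which is the hard part. I would subtract the second equation of \eqref{dgP} at times $m$ and $m+1$, divide by $\Delta t$, and test against $\testE=\sqrt2\,\Phi^{m+\f12}/v_{th}^3$, then test the first equation of \eqref{dgP} against $\testP=-\f{\sqrt2}{v_{th}^3}(E^{m+1}-E^m)/\Delta t$. The crucial algebraic fact making the squared terms telescope is
$$
\f{|E^{m+1}|^2-|E^m|^2}{\Delta t}\,=\,2\,E^{m+\f12}\,\f{E^{m+1}-E^m}{\Delta t}\,,\qquad \f{[\Phi^{m+1}]^2-[\Phi^m]^2}{\Delta t}\,=\,2\,[\Phi^{m+\f12}]\,\f{[\Phi^{m+1}]-[\Phi^m]}{\Delta t}\,,
$$
which is precisely why the averages were introduced: the increment $(E^{m+1}-E^m)/\Delta t$ plays the role of $\partial_t E$ while the midpoint supplies the missing factor. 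Together with the flux identities of \eqref{fluxps} (the same ones yielding $\beta[\partial_t\Phi][\Phi]=\f{\beta}{2}\f{\dD}{\dD t}[\Phi]^2$ in Proposition \ref{prop:2}), this gives the discrete potential-energy balance. Gathering the kinetic increment, the source conversion, and this potential-energy identity exactly as \eqref{mo:1}, \eqref{mo:2} and \eqref{mo:3} were combined, and finally invoking the already-proved discrete mass conservation to recast $C_0^{m+1}-C_0^m$, yields $\mE_h^{m+1}=\mE_h^m$. The hard part throughout is the bookkeeping: every occurrence of the potential and field on the kinetic side must be the midpoint value, so that it matches both the averaged Poisson relation and the telescoping identity above; any mismatch in time level would leave an $O(\Delta t)$ residual and destroy exact conservation.
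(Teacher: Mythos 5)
Your proposal is correct and takes essentially the same route as the paper's own proof: mass and momentum follow from the purely spatial cancellations of Proposition \ref{prop:2} applied at the single time level $m$, and energy conservation is obtained exactly as you describe, by testing the (linear, hence averaged) Poisson relations with $C_1^m$ and $\Phi^{m+\frac12}/v_{th}$, then with $\testE=\sqrt{2}\,\Phi^{m+\frac12}/v_{th}^3$ and $\testP=-\frac{\sqrt{2}}{v_{th}^3}(E^{m+1}-E^m)/\Delta t$, so that the midpoint identities $2E^{m+\frac12}(E^{m+1}-E^m)=|E^{m+1}|^2-|E^m|^2$ and $2[\Phi^{m+\frac12}][\Phi^{m+1}-\Phi^m]=[\Phi^{m+1}]^2-[\Phi^m]^2$ make the potential energy telescope. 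These are precisely the test-function choices and the algebraic mechanism used in the paper.
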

\begin{proof}
  The conservation of mass and momentum follows the lines of the proof
  in Proposition \ref{prop:2}, since we have by construction
  $$
\sum_j a_0^j(g_0^m,1) \,=\, 0\,,\quad{\rm and}\quad  \sum_j
a_1^j(g_1^m,1) \,=\, \sum_j b_1^j(\bC^m,E^m,\Phi^m,1)\,=\, 0\,.
$$

For the conservation of the total energy,  we also proceed as in
Proposition \ref{prop:2}.  We first have
$$
\sum_j a_2^j(g_0^m,1) \,=\, 0\,,
$$
and also
$$
\sum_j b_2^j(\bC^m,E^{m+\f12},\Phi^{m+\f12},1)\,=\, -\f{\sqrt
  2}{v_{th}} \left( (E^{m+\f12}\,,\,C_1^m) \,+\, \sum_{j} \left(\{C_1^m\}-\hat{C}_1^m\right)\,[\Phi^{m+\f12}]_{j-\f12}\right)\,.
$$
Then using the scheme for the Poisson equation \eqref{dgP} with
$\testP=C_1^m$ and the scheme \eqref{euler-1} for $C_0^{m+1}$ with
$\phi_0=\Phi^{m+\f12}/v_{th}$, we obtain
$$
{\f{ (C_2^{m+1}-C_2^{m},1)}{\Delta t} \,=\,}-\sum_j b_2^j(\bC^m,E^{m+\f12},\Phi^{m+\f12},1)\,=\, - \f{\sqrt
  2}{v_{th}^2} \int_0^L \f{C_0^{m+1}-C_0^m}{\Delta t} \, \Phi^{m+\f12}
\,\dD x. 
$$
On the other hand by linearity of the scheme \eqref{dgP} for the
Poisson equation, we apply the scheme to $\f{C_0^{m+1}-C_0^m}{\Delta t}$
and choose  the test function $\testE=\sqrt 2\,\Phi^{m+\f12}/v_{th}^3$
in the second equation  and $\testP=-\f{\sqrt
  2}{v_{th}^3}\f{E^{m+1}-E^{m}}{\Delta t}$ in the first equation, it gives the variation of
the discrete potential energy
$$
\f{\sqrt
     2}{v_{th}^2}\,\int_0^L\f{C_0^{m+1}-C_0^m}{\Delta t}\,\Phi^{m+\f12}\,\dD
   x\,=\;\f{1}{\sqrt{2}\,v_{th}^3}\,
   \left[\int_0^L \f{|E^{m+1}|^2-|E^m|^2}{\Delta t} \dD x \,+\,\beta \, \sum_j
     \f{[\Phi^{m+1}]_{j-\f12}^2 -[\Phi^{m}]_{j-\f12}^2}{\Delta t}
   \right] \,.
   $$
   Finally gathering the previous result, it yields
   \begin{eqnarray*}
   &&\int_0^L C_2^{m+1}\,+\, \f{1}{\sqrt{2}\,v_{th}^3}\,|E^{m+1}|^2\dD x \,+\, \f{\beta}{\sqrt{2}\,v_{th}^3}\,\sum_j
      [\Phi^{m+1}]_{j-\f12}^2 \\
     &&=\, \int_0^L C_2^{m}\,+\, \f{1}{\sqrt{2}\,v_{th}^3}\,|E^{m}|^2\dD x \,+\, \f{\beta}{\sqrt{2}\,v_{th}^3}\,\sum_j
     [\Phi^{m}]_{j-\f12}^2
   \end{eqnarray*}
and from the conservation of mass, we get the conservation of
numerical total energy
\begin{eqnarray*}
   &&\f{1}{2}\left[\int_0^L \,v_{th}^3 {(\sqrt{2} \, C_2^{m+1}+C_0^{m+1})}\,+\, |E^{m+1}|^2\dD x \,+\,\beta \,\sum_j
  [\Phi^{m+1}]_{j-\f12}^2\right] \\
  &&\,=\, \f{1}{2}\left[\int_0^L \,v_{th}^3 {(\sqrt{2} \,C_2^{m}+C_0^{m})}\,+\, |E^{m}|^2\dD x \,+\,\beta \, \sum_j
     [\Phi^{m}]_{j-\f12}^2\right].
    \end{eqnarray*}
\end{proof}

\subsubsection*{Second order scheme}

Let us extend the previous scheme to  second order.  We  first  apply
a classical explicit Euler scheme with a half-time step $\Delta t/2$ for any $n\in\{0,\ldots,\,N_H-1\}\setminus\{2\}$ and $m\geq 0$,
\beq
\label{rk-1}
\f{2\,(C_n^{(1)}-C_n^{m},\phi_n)}{\Delta t} \,+\,a_n(g_n^m,\phi_n) \,+\,
b_n(\bC^m,E^m,\Phi^m,\phi_n)\, =\, 0, \quad \forall \; \phi_n\,\in\,
V_h^k\,.
\eeq
From this system, we get the discrete density $C_0^{(1)}$ at time
$t^{m+\f12}=t^m+\Delta t/2$ and can apply \eqref{dgP}-\eqref{fluxps} for
the computation of the electric field and potential
$(E^{(1)},\Phi^{(1)})$ with the source term $C_0^{(1)}$, hence we compute the unknown $C_2^{(1)}$, given by
\beq
\label{rk-2}
\f{2\, (C_2^{(1)}-C_2^{m},\phi_2)}{\Delta t} \,+\,a_2(g_2^m,\phi_2) \,+\,
b_2\left(\bC^m,E^{m+\f14},\Phi^{m+\f14},\phi_n\right)\,=\, 0, \quad \forall \; \phi_2\,\in\,
V_h^k\,,
\eeq
with
$$
E^{m+\f14} = \f12\,\left(E^m+E^{(1)}\right)\,, \quad \Phi^{m+\f14} = \f12\,\left(\Phi^m+\Phi^{(1)}\right)\,.
$$
Then we compute a second stage with a time step $\Delta t$ for any $n\in\{0,\ldots,\,N_H-1\}\setminus\{2\}$ and $m\geq 0$,
\beq
\label{rk-3}
\f{(C_n^{m+1}-C_n^{m},\phi_n)}{\Delta t} \,+\,a_n(g^{(1)}_n,\phi_n) \,+\,
b_n(\bC^{(1)},E^{(1)},\Phi^{(1)},\phi_n)\, =\, 0, \quad \forall \; \phi_n\,\in\,
V_h^k\,.
\eeq
From this system, we get the discrete density $C_0^{m+1}$ at time
$t^{m+1}=t^m+\Delta t$ and can apply the local discontinuous Galerkin scheme
\eqref{dgP}-\eqref{fluxps} for the computation of the electric field and potential
$(E^{m+1},\Phi^{m+1})$, hence we compute
$$
E^{m+\f12} = \f12\,\left(E^m+E^{m+1}\right)\,, \quad \Phi^{m+\f12} = \f12\,\left(\Phi^m+\Phi^{m+1}\right)\,, 
$$
and the unknown  $C_2^{m+1}$, given by
\beq
\label{rk-4}
\f{ (C_2^{m+1}-C_2^{m},\phi_2)}{\Delta t} \,+\,a_2(g_2^{(1)},\phi_2) \,+\,
b_2(\bC^{(1)},E^{m+\f12},\Phi^{m+\f12},\phi_n)\,=\, 0, \quad \forall \; \phi_2\,\in\,
V_h^k\,.
\eeq
We have the following result for the second order scheme
\eqref{rk-1}-\eqref{rk-4}.
\begin{prop}
  \label{prop:4}
  For any {$N_H \ge 3 \in \NN$}, we consider the solution $(\bC^m,E^m,\Phi^m)_m$ to the
  system \eqref{rk-1}-\eqref{rk-4}. Then, discrete mass and momentum are
  conserved for any $m\geq 0$,
  $$
v_{th}^{k+1}\,\int_0^L C_k^m \, \dD x \, =\, v_{th}^{k+1}\,\int_0^L C_k^0 \, \dD x , \quad k=0, \,1
$$
and the discrete total energy,  defined as
$$
\mE_h^m \,:=\, \frac{1}{2}\,\int_{0}^L \, v_{th}^3 {\left(\sqrt{2} C_2^m
 + C_0^m\right)} \,+\, |E^m|^2 \dD x  \,+\,\frac12 \,\beta \, \sum_{j}[\Phi^m]_{j-\f12}^2,
$$
is also preserved with respect to $m\geq 0$.
\end{prop}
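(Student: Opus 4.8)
**

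The plan is to mirror the structure of the proof of Proposition \ref{prop:3} for the first-order scheme, since the second-order scheme \eqref{rk-1}-\eqref{rk-4} is built from two stages, each of which has exactly the Euler-type form already analyzed. The conservation of mass and momentum is essentially immediate: by construction we again have $\sum_j a_0^j(g_0,1)=0$, $\sum_j a_1^j(g_1,1)=\sum_j b_1^j(\bC,E,\Phi,1)=0$ at each stage, and summing \eqref{rk-1} and \eqref{rk-3} with $\phi_n=1$ over $j$ gives, for $k=0,1$, that the first intermediate stage and the final update each leave $\int_0^L C_k\,\dD x$ unchanged. The momentum conservation at each stage relies on the identity \eqref{co:2} together with the choice of the flux \eqref{fluxps} and the residual $r_1$, exactly as in Proposition \ref{prop:2}, so no new computation is needed beyond invoking those two stage-wise applications.

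For the total energy the key observation is that the second-order method is a \emph{two-stage} scheme in which $C_2$ is updated over the full step $\Delta t$ using the midpoint-type fields $E^{m+\frac12}$, $\Phi^{m+\frac12}$ built from $E^m$ and $E^{m+1}$, while the predictor fields $E^{m+\frac14}$, $\Phi^{m+\frac14}$ serve only to advance the other modes to the intermediate state $\bC^{(1)}$. The decisive point is that the $C_2$-update \eqref{rk-4} and the $C_0$-update in \eqref{rk-3} are coupled through precisely the same averaged fields $E^{m+\frac12}$, $\Phi^{m+\frac12}$ as in the first-order scheme \eqref{euler-1}-\eqref{euler-2}. I would therefore first compute $\sum_j b_2^j(\bC^{(1)},E^{m+\frac12},\Phi^{m+\frac12},1)$ and, by the same manipulation as in Proposition \ref{prop:3}, rewrite it using the Poisson scheme \eqref{dgP} with $\testP=C_1^{(1)}$ and the $C_0$-update \eqref{rk-3} with $\phi_0=\Phi^{m+\frac12}/v_{th}$ to convert the source contribution into $-\frac{\sqrt2}{v_{th}^2}\int_0^L\frac{C_0^{m+1}-C_0^m}{\Delta t}\,\Phi^{m+\frac12}\,\dD x$.

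The heart of the argument is then the discrete potential-energy identity: by linearity of the Poisson scheme applied to $\frac{C_0^{m+1}-C_0^m}{\Delta t}$, with test functions $\testE=\sqrt2\,\Phi^{m+\frac12}/v_{th}^3$ and $\testP=-\frac{\sqrt2}{v_{th}^3}\frac{E^{m+1}-E^m}{\Delta t}$, the telescoping structure $|E^{m+1}|^2-|E^m|^2$ and $[\Phi^{m+1}]^2-[\Phi^m]^2$ emerges exactly as before, because these differences depend only on the endpoint fields $E^m,E^{m+1},\Phi^m,\Phi^{m+1}$ and not on the intermediate predictor values. Gathering these terms yields the telescoping statement $\mE_h^{m+1}=\mE_h^m$ after using the mass conservation to restore the $C_0$ contribution.

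The main obstacle I anticipate is bookkeeping the role of the two distinct averaged fields: one must verify that the predictor fields $E^{m+\frac14},\Phi^{m+\frac14}$, which appear in \eqref{rk-2} and are genuinely needed for second-order accuracy, do \emph{not} enter the final energy balance, so that the cancellation is driven purely by the full-step midpoint fields $E^{m+\frac12},\Phi^{m+\frac12}$. The reason this works is that $C_2^{m+1}$ in \eqref{rk-4} is computed directly from $C_2^m$ (not from $C_2^{(1)}$) over the full step with these midpoint fields, which is precisely the structure that made the first-order proof telescope; the predictor stage only supplies the second-order-accurate coefficients $g_2^{(1)}$ and $\bC^{(1)}$ feeding the source, and its contribution to $\int_0^L C_2\,\dD x$ is exactly absorbed by the same Poisson-scheme duality. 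Once this separation is recognized, the remainder is a verbatim repetition of the computation in Proposition \ref{prop:3}.
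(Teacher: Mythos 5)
Your proposal is correct and takes essentially the same approach as the paper, whose own proof simply notes that each of the two stages reproduces the structure of the first-order scheme \eqref{euler-1}-\eqref{euler-2}, so the argument of Proposition \ref{prop:3} applies stage by stage. Your writeup is in fact more explicit than the paper's: it correctly identifies that the final energy balance is driven by the full-step midpoint fields $E^{m+\f12},\Phi^{m+\f12}$ together with the intermediate coefficient $C_1^{(1)}$ (appearing both in the source of \eqref{rk-4} and in the flux $g_0^{(1)}$ of \eqref{rk-3}), while the predictor fields $E^{m+\f14},\Phi^{m+\f14}$ drop out of the telescoping.
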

\begin{proof}
As we can see, the first stage is the same as the first order scheme \eqref{euler-1}-\eqref{euler-2}, but with a half time step at $t^m+\Delta t/2$, so
the conservation of mass, momentum and energy is directly
obtained. For the second stage, the proof is very similar, and this
step directly ensures conservation of mass, momentum and energy.
\end{proof}

%
\section{Numerical examples}
\label{sec:4}
\setcounter{equation}{0}
\setcounter{figure}{0}
\setcounter{table}{0}

In this section, we will verify our proposed DG/Hermite Spectral method for the one-dimensional Vlasov-Poisson (VP) system. We take the phase domain along $v$ to be $[-8,8]$, with $N_H$ modes for Hermite spectral bases, and $N_x$ cells in space. We note that due to the Hermite spectral basis, there is no truncation error for the conservation of mass, momentum and energy from cut-off along the $v$-direction. The scaling velocity $v_{th}$ is chosen to be $1$ and the Hou-Li filter with $2/3$ dealiasing rule \cite{hou2007computing,filtered} will be used, if without specification.

In the following, we take $P_2$ piecewise polynomial in space and 2nd order scheme in time. We denote this scheme as ``DGHSM". We compute reference solutions using a positivity-preserving semi-Lagrangian finite difference (SLFD) WENO scheme in \cite{xiong2014high}, which is denoted as ``SLFDM". The SLFDM uses discrete velocity
coordinate, and the mesh size is $N_x\times N_v$.

\subsection{Landau damping}
\label{sec:4.1}
We first consider the Landau damping problem for the VP system
with the initial condition:
\beq
\label{landau}
f(0,x,v)=(1+\alpha \cos(k x))\f{1}{\sqrt{2\pi}}\exp(-\f{v^2}{2})\,.
\eeq
Under Hermite spectral basis in velocity, we have
\beq
\label{landauc}
C_0(t,x) = 1+\alpha \cos(k x)\,,
\eeq
and $C_n(t,x) = 0, n\ge1$. The background density $\rho_0=1$. The length of the domain in the $x$-direction is $L=\f{2\pi}{k}$. 

We first take $\alpha=0.05$ with $k=0.5$ and use this example to verify the orders of our scheme. We compute the solution to $T=0.1$. The errors are computed by comparing to the solution on using $N_x=512$ and $P_3$ piecewise polynomial bases, and $N_H=600$. In Table \ref{tab41}, we show the $L^2$ errors and orders for $P_k$ piecewise polynomials with $k=1,2,3$ respectively. Due to the time steps are smaller than the spatial mesh size, we can observe $(k+1)$-th order convergence for $P_k$ polynomials respectively.

We then consider the weak Landau damping with $\alpha=0.01$ and $k=0.5$ in \eqref{landau}. We take $N_H=256$ and $N_x=64$ and compute the solution up to $T=60$. We show the time evolution of the relative deviations of discrete mass, momentum and total energy in Figure \ref{fig41} (left). We can see the errors for these three conservative quantities are up to machine precision, which demonstrate that the scheme can preserve exact mass, momentum and energy. We plot the time evolution of the electric field in $L^2$ norm ($E_2$) and $L^\infty$ norm $(E_{max})$ in Figure \ref{fig41} (right). The numerical damping rate for this case matches the theoretical linear damping rate $-0.1533$ very well.  

We also consider the strong Landau damping with $\alpha=0.5$ and $k=0.5$ in \eqref{landau}. Here we take $N_H=1024$ and $N_x=64$ and compute the solution up to $T=60$. Time evolution of the relative deviations of discrete mass, momentum and total energy are reported in Figure \ref{fig42} (left). Similarly these errors are up to machine precision. We also plot the time evolution of the electric field in $L^2$ norm ($E_2$) and $L^\infty$ norm $(E_{max})$ in Figure \ref{fig42} (right). We compare the electric field to the one obtained by the SLFD WENO scheme with mesh size $256\times512$. We can see that the results match each other. We show the surface plots of the distribution function at $T=40$ for both methods, and they are also very similar.  

\begin{table}[!ht] 
	\centering
	\caption{Numerical $L^2$ errors and orders for Landau damping with initial distribution \eqref{landau}. $\alpha=0.05$ and $k=0.5$. $T=0.1$. $N_H=600$.}
	\label{tab41}
	\vspace{0.15in}
	\begin{tabular}{|l|l|l|l|l|l|l|} \hline
	\multicolumn{1}{|l|}{}&\multicolumn{2}{|c|}{$P_1$}&\multicolumn{2}{|c|}{$P_2$}&\multicolumn{2}{|c|}{$P_3$}\cr\cline{2-7}\hline		
	$N_x$ & $L^2$ error & Order & $L^2$ error & Order & $L^2$ error & Order  \\ \hline
	32 & 1.81E-4 & --   & 4.88E-6 & --   & 1.43E-07 & --   \\ \hline
	64 & 4.53E-5 & 2.00 & 5.65E-7 & 3.11 & 1.24E-08 & 3.53 \\ \hline
	128& 1.17E-5 & 1.95 & 6.22E-8 & 3.18 & 7.84E-10 & 3.98 \\ \hline
    256& 3.17E-6 & 1.88 & 6.94E-9 & 3.16 & 4.96E-11 & 3.98 \\ \hline		
	\end{tabular}
\end{table}

\begin{figure}[!ht] 
	\centering
	\includegraphics[width=3.2in,clip]{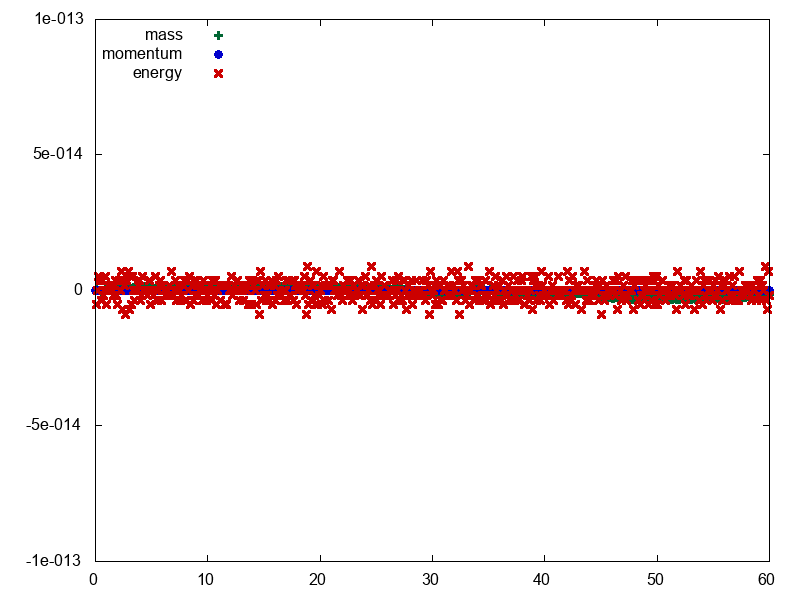},
	\includegraphics[width=3.2in,clip]{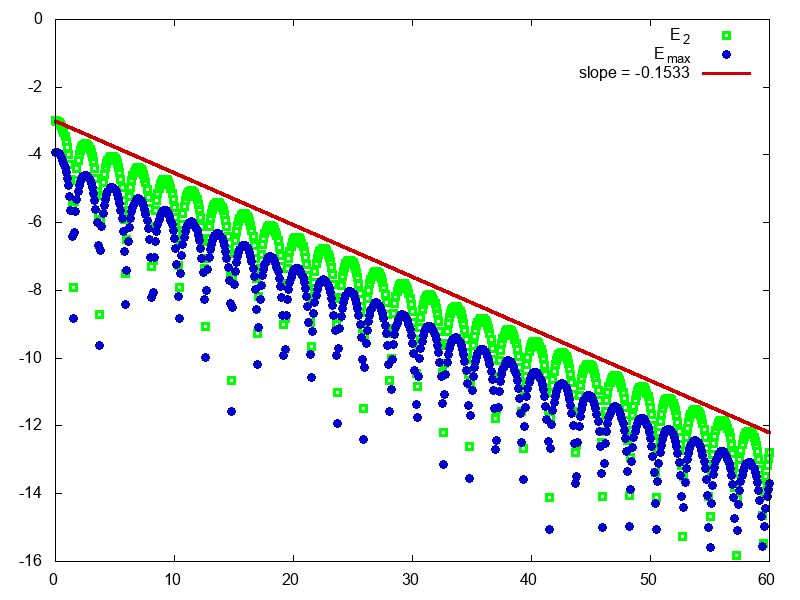}\\
\caption{Weak Landau damping of the initial distribution (\ref{landau}) with $\alpha=0.01$ and $k=0.5$. $T=60$. 
Left: deviation of mass, momentum and energy. Right: time evolution of the electric field in $L^2$ norm ($E_2$) and $L^\infty$ norm ($E_{max}$) (logarithmic value). Mesh size: $N_x\times N_H = 64 \times 256$.}
\label{fig41} 
\end{figure}

\begin{figure}
	\centering
	\includegraphics[width=3.2in,clip]{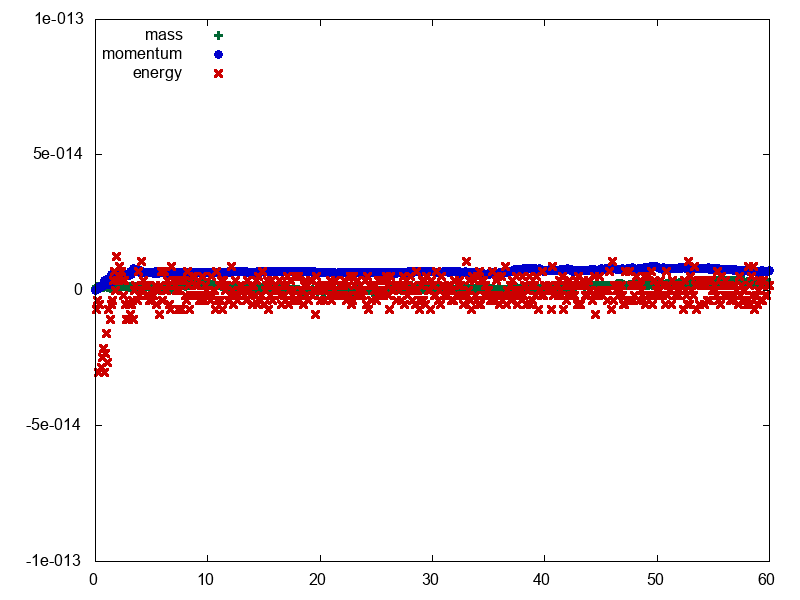},
	\includegraphics[width=3.2in,clip]{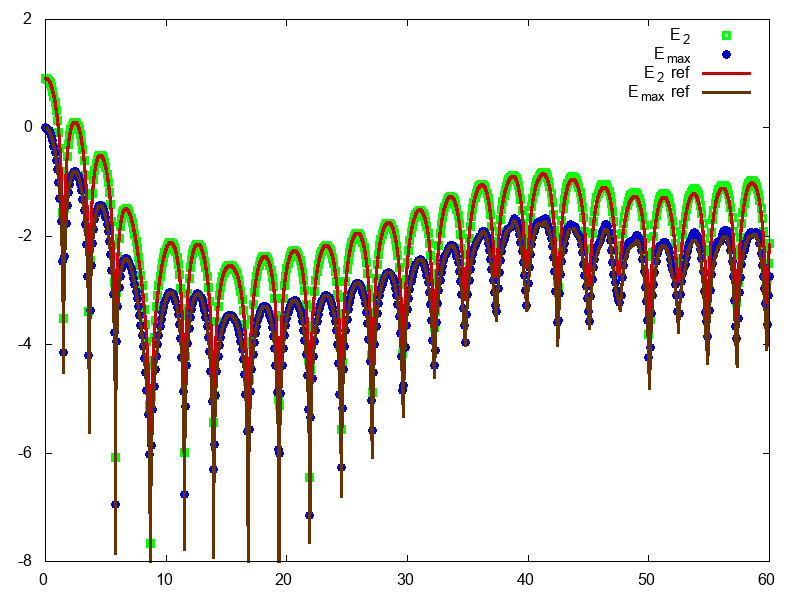}\\
	\caption{Strong Landau damping of the initial distribution (\ref{landau}) with $\alpha=0.5$. $T=60$. 
		Left: deviation of mass, momentum and energy. Right: time evolution of the electric field in $L^2$ norm ($E_2$) and $L^\infty$ norm ($E_{max}$) (logarithmic value). DGFSM: $N_x\times N_H = 64 \times 1024$. The reference solution is from SLFDM with $N_x\times N_v=256\times 512$.}
	\label{fig42}
\end{figure}

\begin{figure}
	\centering
	\includegraphics[width=3.2in,clip]{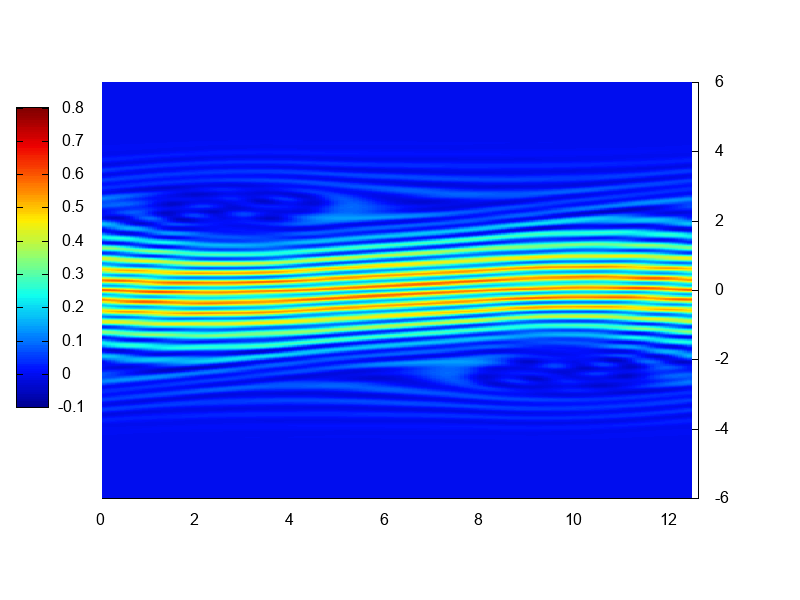}
	\includegraphics[width=3.2in,clip]{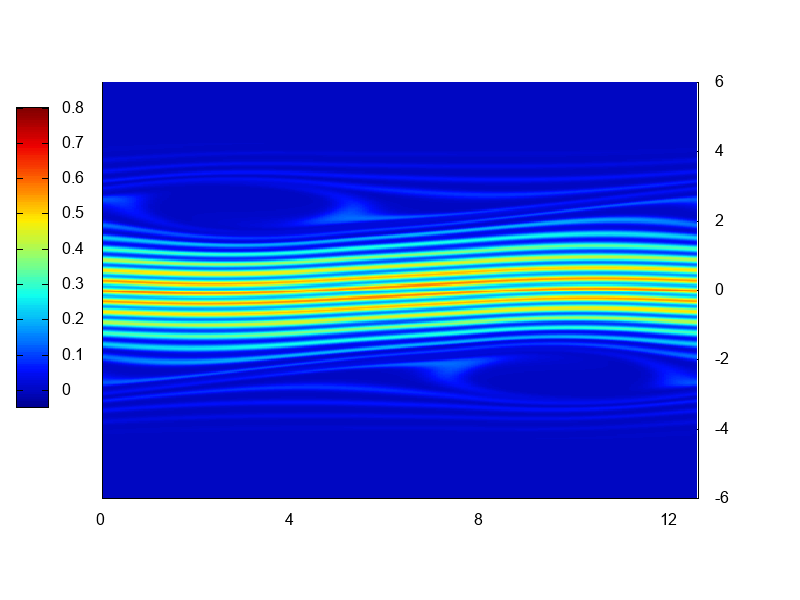}
	\caption{Strong Landau damping of the initial distribution (\ref{landau}) with $\alpha=0.5$ and $k=0.5$. Surface plot of the distribution function at $T=40$.  Left: $N_x\times N_H = 64 \times 1024$ for DGHSM; right: $N_x\times N_v = 256\times 512$ for SLFDM. }
	\label{fig43}
\end{figure}

\subsection{Two stream instability}
\label{sec:4.2}

In this example, we consider the two stream instability problem with the initial distribution function
\beq
\label{2stream}
f(0,x,v)=\f{2}{7}(1+5v^2)(1+\alpha((\cos(2kx)+\cos(3kx))/1.2+\cos(kx))\f{1}{\sqrt{2\pi}}\exp(-\f{v^2}{2})\,,
\eeq
where $\alpha=0.01$ and $k=0.5$. For this case, we have
$$
\left\{
  \begin{array}{l}
\ds C_0(t,x)\,=\,\f{12}{7}(1+\alpha((\cos(2kx)+\cos(3kx))/1.2+\cos(kx))\,,\\
\ds C_2(t,x)\,=\,\f{10\sqrt{2}}{7}(1+\alpha((\cos(2kx)+\cos(3kx))/1.2+\cos(kx))\,.
  \end{array}
\right.
$$
Other $C_n(t,x)$'s are all zero and $\rho_0=12/7$. Similarly $L=\f{2\pi}{k}$.

We compute the solution up to time $T=60$. For DGHSM, we take $N_x=64$ and $N_H=256$. We show the time evolution of the relative deviations of discrete mass, momentum and total energy in Figure \ref{fig44} (left). We can see these errors are still up to machine precision. Although the momentum decreases a little, the errors are at the level of $10^{-13}$. We also plot the time evolution of the electric field in $L^2$ norm ($E_2$) and $L^\infty$ norm $(E_{max})$ in Figure \ref{fig44} (right). We compare them with the results from SLFDM on the mesh $256\times512$, and they match each other very well. For this case, we show the surface plots of $f$ at $T=50$ in Figure \ref{fig45} for both methods. The results are comparable, only very weak high frequency instabilities are observed localized in the center for DGFSM.

\begin{figure}
	\centering
	\includegraphics[width=3.2in,clip]{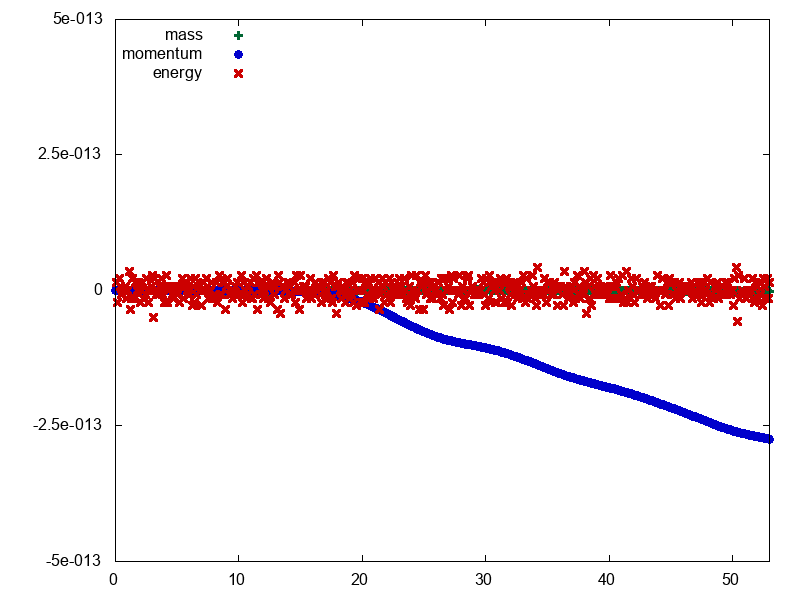},
	\includegraphics[width=3.2in,clip]{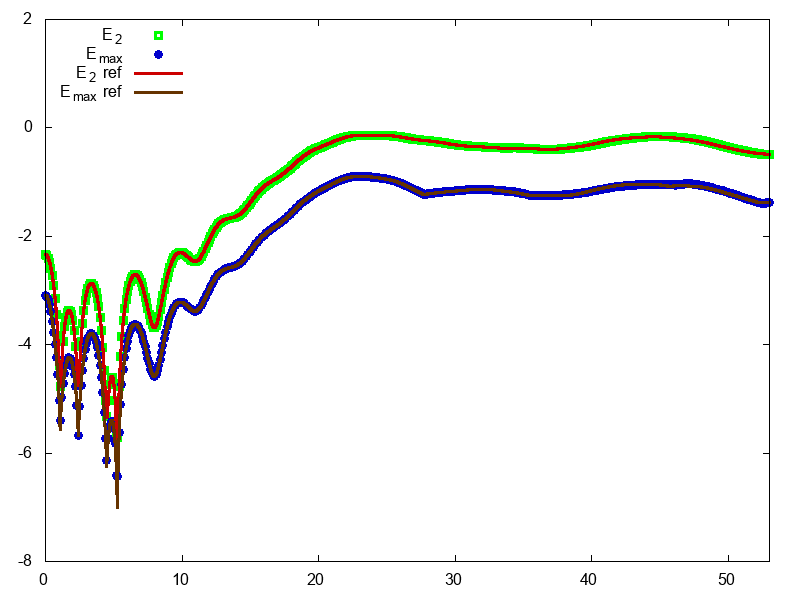}\\
	\caption{Two stream instability of the initial distribution \eqref{2stream} with $\alpha=0.01$ and $k=0.5$. Left: deviation of mass, momentum and energy. Right: time evolution of the electric field in $L^2$ norm ($E_2$) and $L^\infty$ norm ($E_{max}$) (logarithmic value). DGFSM: $N_x\times N_H = 128 \times 512$. The reference solution is from SLFDM with $N_x\times N_v=256\times 512$.}
	\label{fig44}
\end{figure}

\begin{figure}
	\centering
	\includegraphics[width=3.2in,clip]{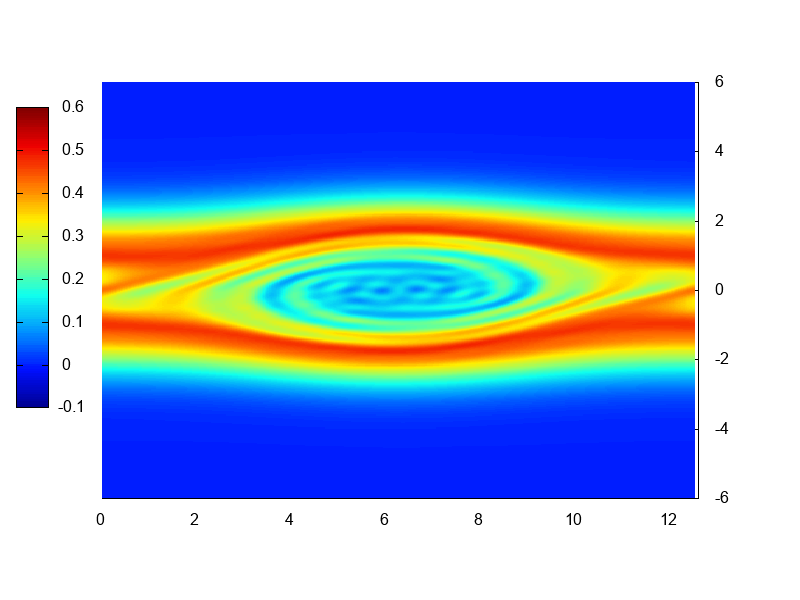}
	\includegraphics[width=3.2in,clip]{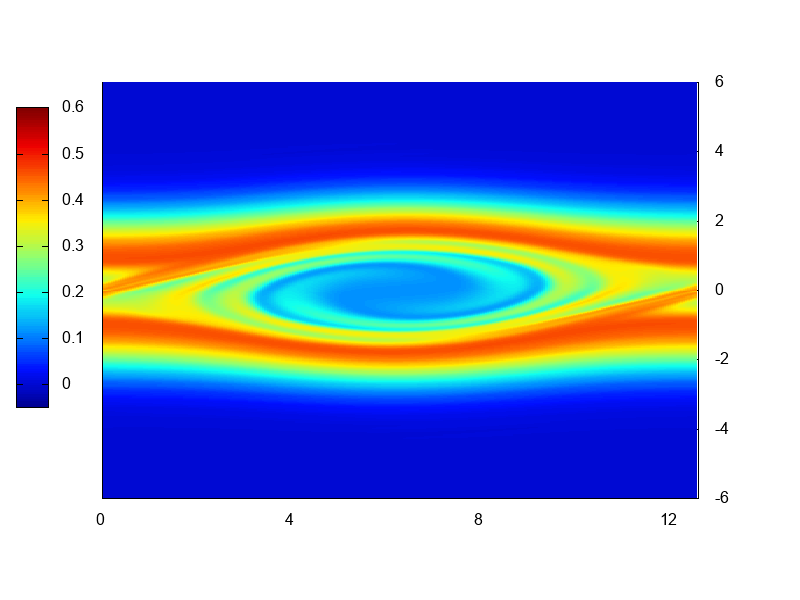}
	\caption{Two stream instability of the initial distribution (\ref{2stream}) with $\alpha=0.01$ and $k=0.5$. Surface plot of the distribution function $f$ at $T=50$. Left: $N_x\times N_H = 128 \times 512$ for DGHSM; right: $N_x\times N_v = 256\times 512$ for SLFDM. }
	\label{fig45}
\end{figure}

\subsection{Bump-on-tail instability}
\label{sec:4:3}
Lastly we consider the bump-on-tail instability problem with the initial distribution as
\begin{align}
\label{bot}
f(0,x,v)=f_{b.o.t}(v)(1+\alpha\cos(k\,n\,x))\,,
\end{align} 
where the bump-on-tail distribution is
\begin{align}
f_{b.o.t}(v)=\frac{n_p}{\sqrt{\pi}v_{th,p}}\exp\left(-\frac{v^2}{v^2_{th,p}}\right)+\frac{n_b}{\sqrt{\pi}v_{th,b}}\exp\left(-\frac{(v-v_{d,b})^2}{v_{th,b}^2}\right)\,.
\end{align}

We first choose the parameters to be $n_p=0.99$, $n_b=0.01$, $v_{d,b}=1$, $v_{th,p}=0.28284271$, $v_{th,b}=7.0710678$E$-02$, $\alpha=0.0001$, $n=10$ and $k=0.1$. The computational domain is $[0,\frac{2\pi}{k}]\times[-8, 8]$. The settings are the same as in \cite{holloway2} (Table 7). For this case with a small perturbation $\alpha=0.0001$, we do not apply the Hou-Li filter to avoid numerical dissipation. 

We compute the solution up to time $T=200$ with $N_x=64$ and $N_H=256$. The time evolution of the relative deviations of discrete mass, momentum and total energy are reported in Figure \ref{fig47} (left). We can see these errors are up to machine precision for this long time simulation. We also plot the time evolution of the electric field in $L^2$ norm ($E_2$) and $L^\infty$ norm $(E_{max})$ in Figure \ref{fig47} (right). We compare them to the results by using the SLFD WENO method with mesh size $256\times512$. We can see they match each other very well. Besides both of them have a growing rate very close to $0.1084353$ from the linear theory \cite{holloway2}.

Secondly, we consider a strong perturbation with $\alpha=0.04$, $n=3$ and $k=0.1$. Other parameters are set to be $n_p=0.9$, $n_b=0.1$, $v_{d,b}=4.5$, $v_{th,p}=\sqrt{2}$, $v_{th,b}=\sqrt{2}/2$. The computational domain is still $[0,\frac{2\pi}{k}]\times[-8, 8]$. These settings have been used in \cite{shoucri1974,xiong2014high}. For this case, the Hou-Li filter is used and besides we take the scaling velocity $v_{th}=1.4$.

We take $N_x\times N_H=128\times 512$ for DGFSM and compare the solutions to SLFDM with $N_x\times N_v=160\times 320$. We first show the time evolution of the relative deviations of discrete mass, momentum and total energy in Figure \ref{fig48} (top left). These errors are also up to machine precision. We also plot the time evolution of the electric field in $L^2$ norm ($E_2$) and $L^\infty$ norm $(E_{max})$ in Figure \ref{fig48} (top right), as well as the time evolution of the total electric energy in Figure \ref{fig48} (bottom). We compare them to the results by using the SLFD WENO method with mesh size $160\times320$. We can see these results have the same structure and they are similar to those in \cite{shoucri1974}. Finally we show the surface plots of the distribution function at $T=20,50,200$ in Figure \ref{fig49}. From the comparison of these two methods, we can find that at the beginning $T=20$, the solutions are very close. But as time evolves,
the solutions are moving in different phases. However, the results from DGFSM look relatively well.

\begin{figure}
	\centering
	\includegraphics[width=3in,clip]{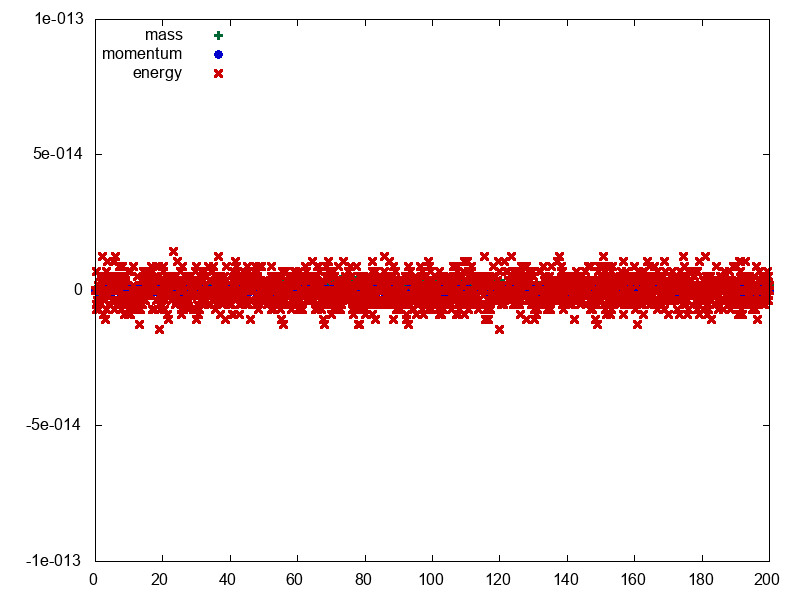}
	\includegraphics[width=3in,clip]{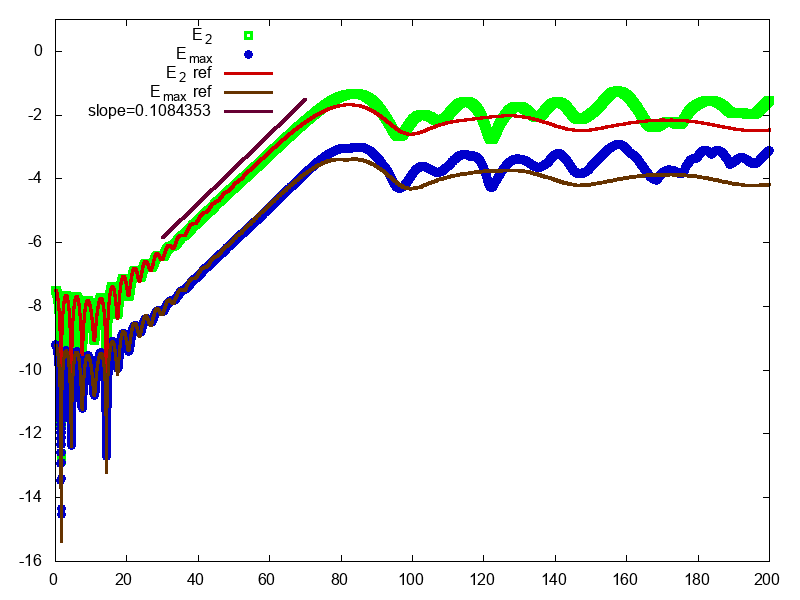}\\
	\caption{Bump-on-tail instability with the initial condition (\ref{bot}) for a small perturbation $\alpha=0.0001$. $k=0.1$ and $n=10$. Left: deviation of mass, momentum and energy. Right: time evolution of the electric field in $L^2$ norm ($E_2$) and $L^\infty$ norm ($E_{max}$) (logarithmic value). DGFSM: $N_x\times N_H = 64 \times 256$. The reference solution is from SLFDM with $N_x\times N_v=160\times 1024$.}
	\label{fig47}
\end{figure}

\begin{figure}
	\centering
	\includegraphics[width=3in,clip]{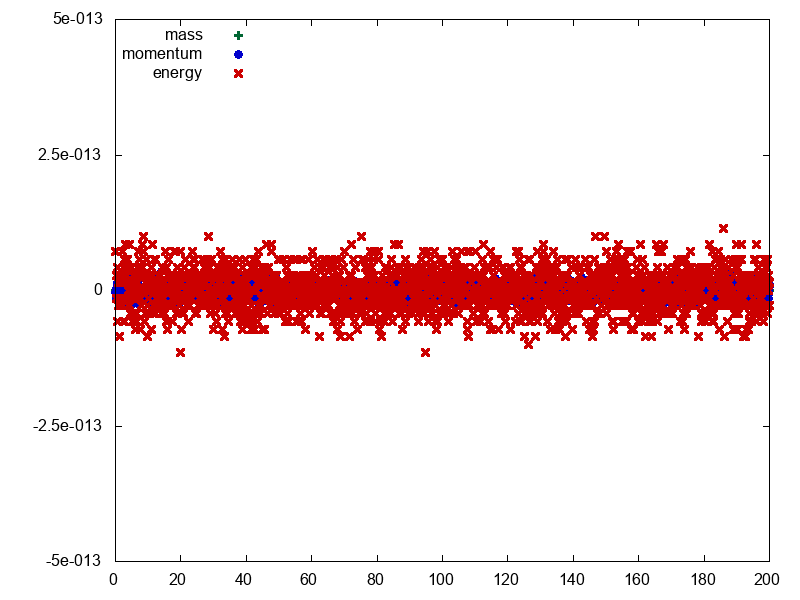}
	\includegraphics[width=3in,clip]{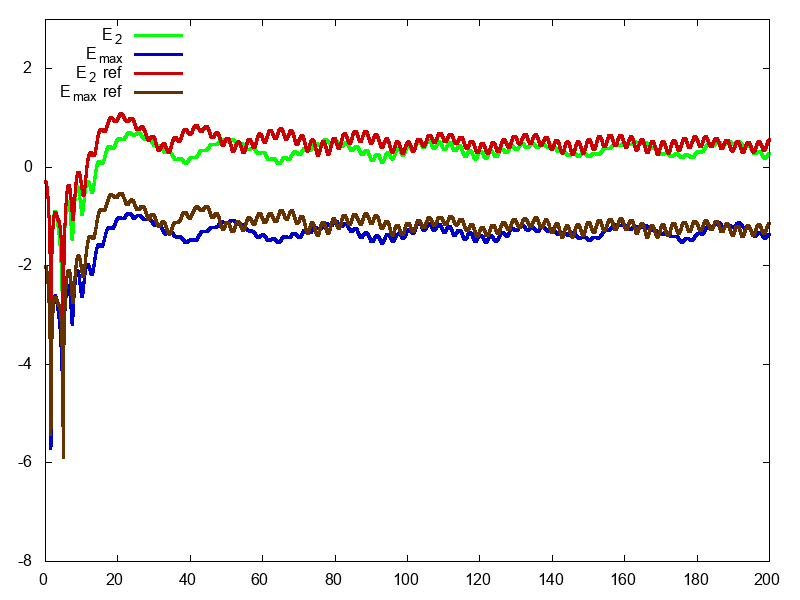} \\
	\includegraphics[width=3in,clip]{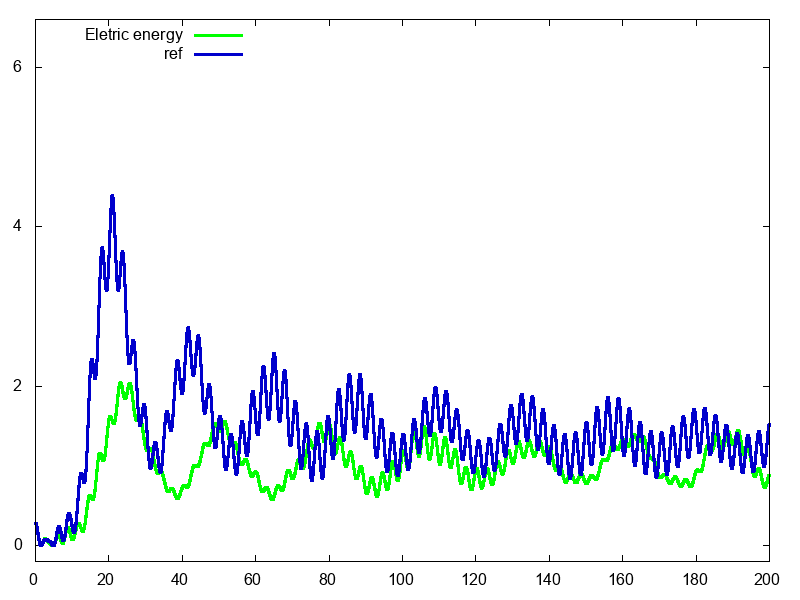},
	\caption{Bump-on-tail instability with the initial condition (\ref{bot}) for a strong perturbation $\alpha=0.04$. $k=0.1$ and $n=3$. Top left: deviation of mass, momentum and energy. Top right: time evolution of the electric field in $L^2$ norm ($E_2$) and $L^\infty$ norm ($E_{max}$) (logarithmic value). Bottom: time evolution of the electric energy. DGFSM: $N_x\times N_H = 128 \times 512$. The reference solution is from SLFDM with $N_x\times N_v=160\times 320$.}
	\label{fig48}
\end{figure}

\begin{figure}
	\centering
	\includegraphics[width=3.2in,clip]{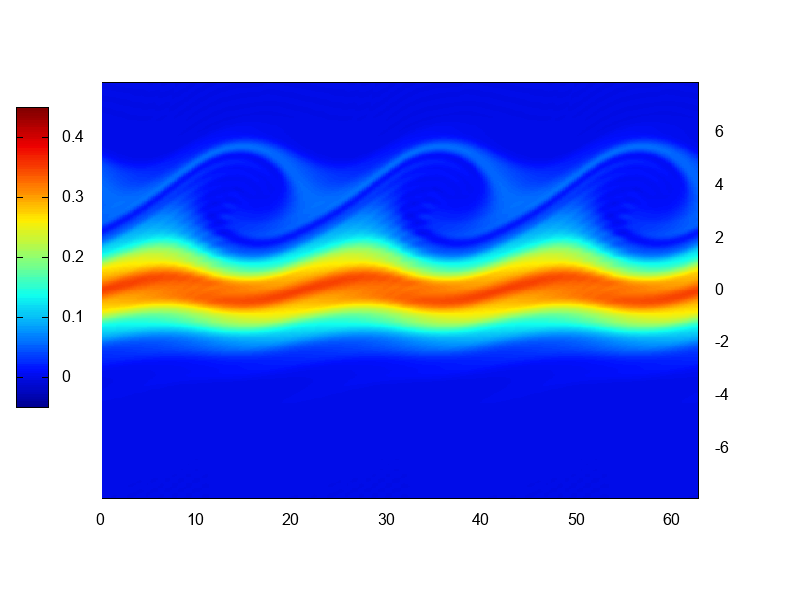},
	\includegraphics[width=3.2in,clip]{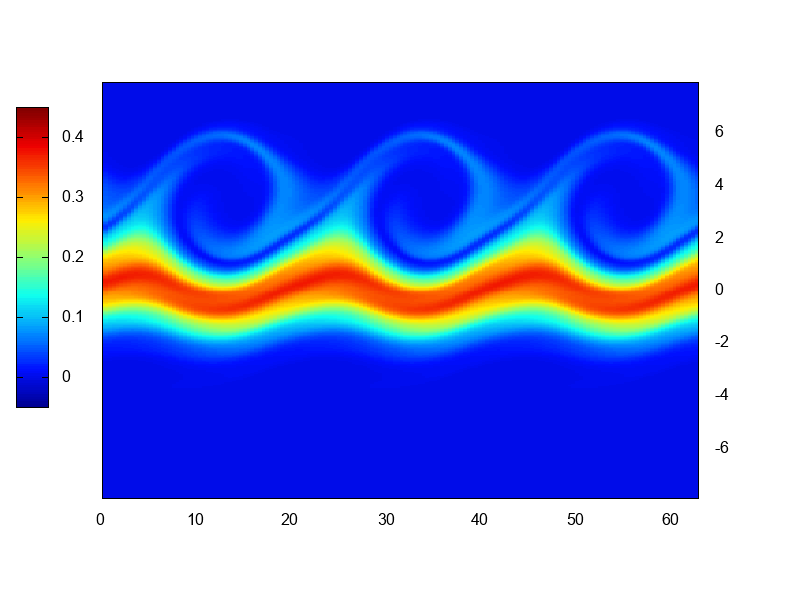}
	\includegraphics[width=3.2in,clip]{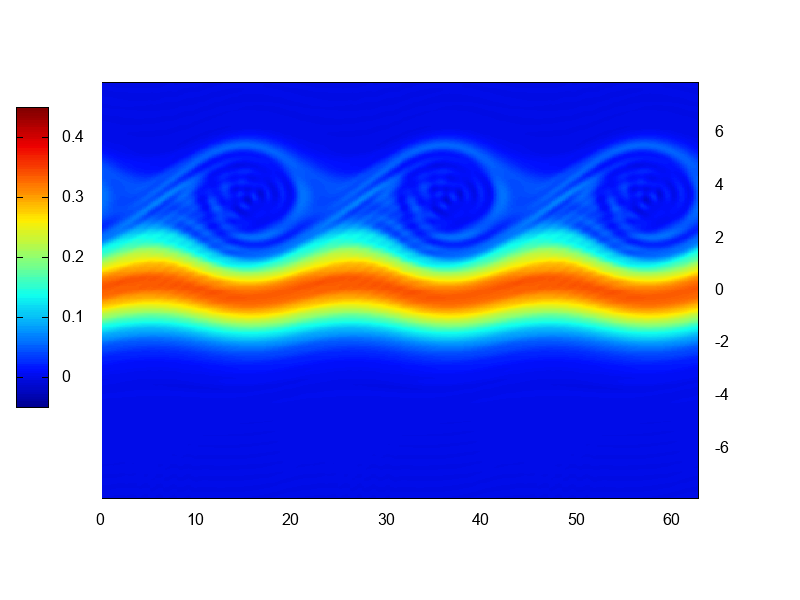}
	\includegraphics[width=3.2in,clip]{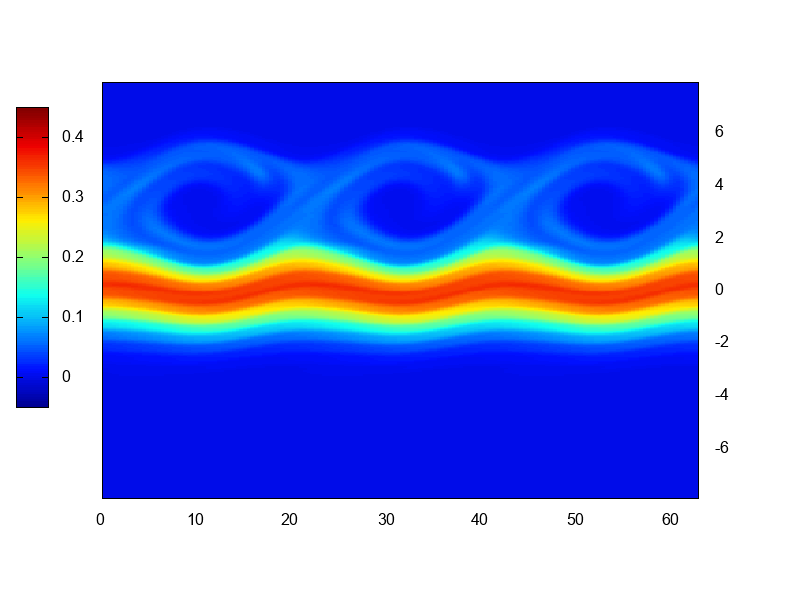}
	\includegraphics[width=3.2in,clip]{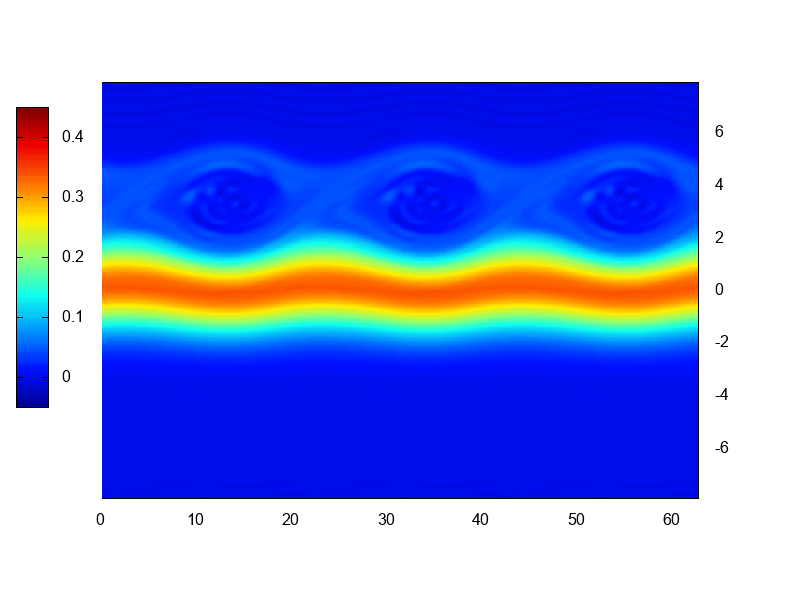}
	\includegraphics[width=3.2in,clip]{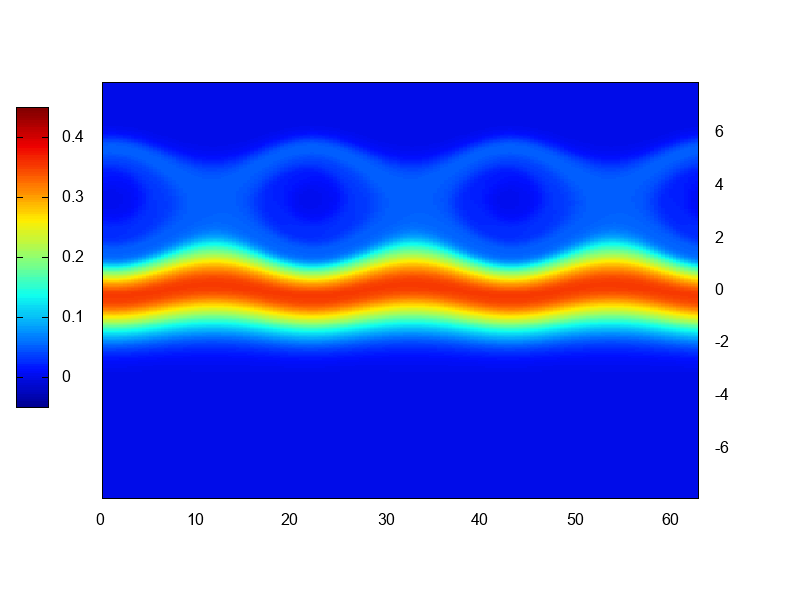}
	\caption{Bump-on-tail instability with the initial condition (\ref{bot}) for a strong perturbation $\alpha=0.04$. $k=0.1$ and $n=3$. Surface plots of the distribution function $f$ at $T=20, 50, 200$ (from top to bottom). Left: $N_x \times N_H = 128 \times 512$ for DGFSM; right: $N_x\times N_v=160\times 320$ for SLFDM. }
	\label{fig49}
\end{figure}

%
\section{Conclusion and perspectives}
\label{sec:5}
\setcounter{equation}{0}
\setcounter{figure}{0}
\setcounter{table}{0}

In this paper, we introduced a new method to solve the Vlasov
equation using Hermite polynomials for the velocity variable and
discontinuous Galerkin methods for space discretization. This method enforces the
conservation of  global mass, momentum and energy. A filtering
algorithm  allows to control spurious oscillations without affecting
the conservation properties. Therefore, our approach  allows to
treat strong nonlinear problems without generating numerical
instabilities. Numerical results show that our scheme is as
accurate as semi-lagrangian methods and the local reconstruction is well suited to do
parallel computation in high dimension. 

The main advantage of the present approach, using Hermite functions for
the velocity variable, is that it provides the evolution of
macroscopic quantities, that is, density, momentum, energy and higher order
moment in velocity of the distribution function. Of course, it would be
interesting to adapt the number of moments during the time evolution according
to the variations of the distribution function as in \cite{manzini2016}. In a
further study we would like to investigate the extension to the
Vlasov-Maxwell system and the  coupling of this
algorithm with fluid solvers in order to solve the kinetic equation
only where it is useful \cite{filbet-rey,filbet-xiong}.

%
\section*{Acknowledgement}

Francis Filbet is supported by the EUROfusion Consortium and has
received funding from the Euratom research and training programme
2014-2018 under grant agreement No 633053. The views and opinions
expressed herein do not necessarily reflect those of the European Commission.

T. Xiong acknowledges support by NSFC grant No. 11971025, NSF grant of Fujian Province No. 2019J06002 and NSAF No. U1630247.

{This work started when the first author visited the Tianyuan Mathematical Center in Southeast China at Xiamen University in June 2019 for a research group discussion. The author would like to thank the Tianyuan Mathematical Center for its hospitality.}

%
\section*{Conflict of interest}
On behalf of all authors, the corresponding author states that there is no conflict of interest.

%

\bibliographystyle{abbrv}
\bibliography{refer}
\end{document}